\numberwithin{equation}{section}
\begin{document}

\allowdisplaybreaks
	
\renewcommand{\PaperNumber}{127}

\FirstPageHeading

\renewcommand{\thefootnote}{$\star$}

\ShortArticleName{On $1$-Harmonic Functions}

\ArticleName{On $\mathbf{1}$-Harmonic Functions\footnote{This paper is a
contribution to the Proceedings of the 2007 Midwest
Geometry Conference in honor of Thomas~P.\ Branson. The full collection is available at
\href{http://www.emis.de/journals/SIGMA/MGC2007.html}{http://www.emis.de/journals/SIGMA/MGC2007.html}}}

\Author{Shihshu Walter WEI}

\AuthorNameForHeading{S.W. Wei}

\Address{Department of Mathematics, The University of Oklahoma, Norman, Ok 73019-0315, USA}

\Email{\href{mailto:wwei@ou.edu}{wwei@ou.edu}}

\ArticleDates{Received September 18, 2007, in f\/inal form December
17, 2007; Published online December 27, 2007}

\Abstract{Characterizations of entire subsolutions for the
$1$-harmonic equation of a constant $1$-tension f\/ield are given
with applications in geometry via transformation group theory. In
particular, we prove that every level hypersurface of such a
subsolution is calibrated and hence is area-minimizing over
$\mathbb{R}$; and every $7$-dimensional $SO(2)\times
SO(6)$-invariant absolutely area-minimizing integral current in
$\mathbb{R}^8$ is real analytic. The assumption on the $SO(2)
\times SO(6)$-invariance cannot be removed, due to the f\/irst
counter-example in $\mathbb{R}^8$, proved by Bombieri, De
Girogi and Giusti.}

\Keywords{$1$-harmonic function; $1$-tension f\/ield; absolutely area-minimizing integral current}

\Classification{53C40;  53C42}

\section{Introduction}

The study of $1$-harmonic functions, or more generally that of
$p$-harmonic maps is an area of an active research that is related
with many branches of mathematics.  For instance, in a celebrated
paper of Bombieri, De Girogi and Giusti \cite{BDG}, a $1$-harmonic
function has been constructed to provide a counter-example for
interior regularity of the solution to the co-dimension one
Plateau problem in $\mathbb{R}^n$ for $n > 7$. Recall a $C^1$
functions $f: \mathbb{R}^n \to \mathbb{R}\, $ is said to be \emph
{$1$-harmonic} if it is a~weak solution of $1$-harmonic equation
\begin{gather}
\mbox{div}\left(\frac {\nabla f} { |\nabla f|}\right)= 0\,
,\label{6.0}
\end{gather}
where $|\nabla f|$ is the length of the gradient $\nabla f$ of $f$,
and for a $C^2$ function $f$ without a critical point, $ \mbox{div}
\left(\frac {\nabla f} { |\nabla f|}\right)\, $ is said to be the
\emph {$1$-tension field} of $f$.

In this paper, characterizations of entire subsolutions for the
$1$-harmonic equation of a constant $1$-tension f\/ield  are given
in various aspects, and their relationships with calibration
geomet\-ry are established (cf.~Theorem~\ref{T:6.2}, Corollary
\ref{C:6.2}). As applications, we prove via transformation group
theory (cf.~\cite{H,HL,L2,B,W}) that
the cone over $S^1 \times S^5$ is not minimizing in $\mathbb{R}^8$
but is stable; that any $7$-dimensional $SO(2) \times
SO(6)$-invariant absolutely area-minimizing integral current in~$\mathbb{R}^8$ is real analytic; and that the only $7$-dimensional
$SO(3) \times SO(5)$-invariant minimi\-zing integral current with
singularities in $\mathbb{R}^8$ is the cone over $S^2 \times S^4$, and is minimizing over~$\mathbb{R}$ (cf.~Theo\-rems~\ref{T:6.3}--\ref{T:6.4}). These results improved an early partial
proof by numerical computation done by Plinio % Amarante Quirino
Simoes \cite{S} in his Berkeley thesis. The assumption on the
$SO(2) \times SO(6)$-invariance cannot be removed, due to the
f\/irst counter-example of Bombieri, De Girogi and Giusti that the
cone over $S^3(\frac 1{\sqrt 2}) \times S^3(\frac 1{\sqrt
2})\subset S^7(1)$ is area-minimizing in $\mathbb{R}^8$. It
should be pointed out that %Professor
Fang-Hua Lin~\cite{Li} proved
that the cone over $S^1 \times S^5$ is one-sided area-minimizing
and is stable by a dif\/ferent method. By constructing $1$-harmonic
functions on hyperbolic space $H^n$, $H^n \times H^n$, $H^n
\times SO(n,1)$ and many other associated spaces, S.P.~Wang and
the author~\cite{WaW} show the Bernstein Conjecture in these
spaces to be false in all dimensions. In particular, these
constructions give the~\emph {first} set of examples of complete,
smooth, embedded, minimal \mbox{(hyper-)}sur\-faces in hyperbolic space
$H^n$ in all dimensions (cf.~also Remark~\ref{rem.3.3}(ii)).

\section{Fundamentals in geometric measure theory}

For our subsequent development, we recall some fundamental facts,
def\/initions, and notations, for which the reference is Federer's
book \cite{F2} and paper \cite{F4}.

Let $N$ denote an $n$-dimensional Riemannian manifold and denote
by $\mathcal{R}_p^{\rm loc}(N)$ the set of $p$-dimensional, locally
rectif\/iable currents (of Federer and Fleming, cf.~\cite{FF}) on $N$.
For $S \in \mathcal{R}_p^{\rm loc}(N)$, denote the mass of $S$ by
$\mathbf{M}(S)$, and the ${\emph boundary}$ of $S$ by $\partial
S$, and is given by $(\partial S)( w)=S(d w)$, where
$w$ is a smooth $p$-form and $d$ is the exterior
dif\/ferentiation. From a calculus of variational viewpoint, we make
the following

\begin{definition}
A current $T \in \mathcal{R}_k^{\rm loc}(N)$ is said to be
\emph {stationary} if $\frac
{d}{dt}\mathbf{M}({\phi^V_{t_*}}(T))|_{t=0}$ for all vector f\/ields
$V$ on $N$ with compact support where $\phi^V_t$ is the f\/low
associated with $V$, and \emph {stable} if for every vector f\/ields
$V$ on $N$ with compact support, there exists an $\epsilon
> 0$ such that $\mathbf{M}(T) \leq \mathbf{M}({\phi^V_{t_*}}(T))$
for $|t| < \epsilon$.
\end{definition}

We are primarily interested in minimizing currents.

\begin{definition}
A current $T \in \mathcal{R}_k^{\rm loc}(N)$ is
\emph{homologically (resp.\ absolutely) area-minimizing} over~$\mathbb{Z}$ if for all compact sets $K \subset M$, we have
$\mathbf{M}(\phi_KT) \leq \mathbf{M}((\phi_KT)+S)$ for all $S \in
\mathcal{R}_k^{\rm loc}(N)$ having compact support and being the
boundary of some current in $\mathcal{R}_{k+1}^{\rm loc}(N)$ with
compact support (resp. the empty boundary)(here $\phi_K$ denotes
the characteristic function on $K$).
\end{definition}

Using a dimension reduction technique, Federer proves that the
support of an area-minimizing integral current $T$~\cite{FF} minus
another compact set $\mathrm{S}$ whose Hausdorf\/f dimension does not
exceed $n-8$ is an $(n-1)$-dimensional analytic manifold~\cite{F3}.
Hence, if $n\le 7$, then $\mathrm{S}= \varnothing$. If $n = 8$,
$\mathrm{S}$ consists of at most isolated points~\cite[5.4.16]{F2}. This
result is optimal by the counter-example due to Bombieri--De Giorgi--Giusti \cite{BDG}
 that  $ \{x\in
\mathbb{R}^{2m}:x^2_1+\cdots+x^2_m=x^2_{m+1}+\cdots+x^2_{2m}\}$ is
an area-minimizing cone over the product of $(m-1)$-spheres
$\big\{x\in\mathbb{R}^{2m}:x^2_1+\cdots+x^2_m=x^2_{m+1}+\cdots{}$ ${}+x^2_{2m}=\frac12\big\}$
in $\mathbb{R}^{2m}$ for $m\geq 4$.

\smallskip

The union of the groups $\mathcal{F}_{m, K}(U)=\{R + \partial T: R
\in \mathcal{R}_{m, K}(U), T \in \mathcal{R}_{m+1, K}(U)\}$
corresponding to all compact $K \subset U$ is the group
$\mathcal{F}_m(U)$ of $m$-dimensional \emph {integral flat chains
in an open subset}~$U$ of $\mathbb{R}^n$. We denote the group of
$m$-dimensional \emph {integral flat chains, cycles and
boundaries} by $\mathcal{F}_m(A)=\mathcal{F}_m(\mathbb{R}^n) \cap
\{S: {\rm spt}\, S \subset A\}$, $\mathcal{Z}_m(A, B)=\mathcal{F}_m(A)
\cap \{S:
\partial S \subset \mathcal{F}_m(B) \ \ \hbox {or} \ \ m=0\}$, and $\mathcal{B}_m(A, B)=\{R +
\partial T: R \in \mathcal{F}_m(B), T \in \mathcal{F}_{m+1}(A)\}\,  \hbox
{respectively}.$ Similarly, we def\/ine and denote $\mathbf{F}_m(A)$,
$\mathbf{Z}_m(A, B)$ and $\mathbf{B}_m(A, B)$ the vector space of
$m$-dimensional \emph {real flat chains, cycles and boundaries}
respectively, where $B \subset A$ are compact Lipschitz
neighborhood retract in~$U$.

For every positive convex parametric integrand $\psi$, and every
compact subset $K$ of $A$,  we def\/ine $\mathcal{Z}_{m, K}(A,
B)=\mathcal{Z}_m(A, B) \cap \{R: \hbox {spt} R \subset K\}$,
$\mathcal{B}_{m, K}(A, B)=\mathcal{B}_m(A, B) \cap \{R: \hbox{spt}\, R \subset K\}$, $\mathbf{Z}_{m, K}(A, B)=\mathbf{Z}_m(A,
B) \cap \{R: \hbox{spt}\, R \subset K\},$ and $\mathbf{B}_{m, K}(A,
B)=\mathbf{B}_m(A, B) \cap \{R: \hbox{spt}\, R \subset K\}$, and
make the following

\begin{definition}
An $m$-dimensional rectif\/iable current $Q$ (resp. $Q^{\prime}$) is
said to be \emph {absolutely (resp. homologically) $\psi$-minimizing
in $K$ with respect to $(A, B)$ $over\ \mathbb{Z}$} if
\begin{gather*}
\int_Q \psi =  \inf \left\{ \int _S \psi : S \in \mathcal{F}_{m,
K}(U), Q-S \in \mathcal{Z}_{m, K}(A, B) \right\}\\
\left( \hbox {resp.} \ \ \int_{Q^{\prime}}\psi =  \inf \left\{ \int _S
\psi : S \in \mathcal{B}_{m,K}(U), Q^{\prime}-S \in \mathcal{B}_{m,
K}(A, B)\right\} \ \right).
\end{gather*}
\end{definition}
\begin{definition}
An $m$-dimensional real f\/lat chain $Q$ (resp.~$Q^{\prime}$) is said
to be \emph {absolutely (resp. homologically) $\psi$-minimizing in
$K$ with respect to $(A, B)$ \underbar{over $\mathbb{R}$}} if
\begin{gather*}
\int_Q \psi = \inf \left\{ \int _S \psi : S \in \mathbf{F}_{m,
K}(U), Q-S \in \mathbf{Z}_{m, K}(A, B) \right\}\\
\left( \ \hbox {resp.} \ \ \int_{Q^{\prime}}\psi = \inf\left\{ \int _S \psi
: S \in \mathbf{B}_{m,K}(U), Q^{\prime}-S \in \mathbf{B}_{m, K}(A,
B)\right\} \ \right).
\end{gather*}
\end{definition}
 We will make comparisons between real and integral absolute (resp.
homological) minimizing currents in the subsequent Sections~\ref{sec.3}, \ref{sec.4}, and~\ref{sec.5}.

\section[Characterizations of subsolutions  for $1$-harmonic
equation of constant $1$-tension field]{Characterizations of subsolutions  for $\mathbf{1}$-harmonic
equation\\ of constant $\mathbf{1}$-tension f\/ield} \label{sec.3}

We
connect an entire subsolution  of this sort, with a calibration.
Recall a calibration is a closed form with comass $1$.

\begin{lemma} \label{lem1.1}
Let $M$ be a complete noncompact Riemannian
manifold. For any $x_0 \in M$ and any pair of positive numbers
$s$, $t$ with $s < t$, there exists a rotationally symmetric Lipschitz
continuous function $\psi(x)=\psi(x;s,t)$ and a constant $C_1 > 0$
(independent of $x_0$, $s$, $t$) with the properties:
\begin{gather}
(i)~ \ \   \psi \equiv 1 \ \  on \ \ B(x_0;s),  \ \ and \ \ \psi \equiv 0 \ \ \mbox{off}
\ \ B(x_0;t);\nonumber\\
(ii) \ \  \label{3.0}| \nabla \psi|
\leq \frac{C_1}{t-s}, \ \ a.e.  \ \ on \ \ M.
\end{gather}
\end{lemma}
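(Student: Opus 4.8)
The plan is to use the classical distance-function cutoff. First I would set $r(x) := d(x_0,x)$ and recall that, by the triangle inequality, $r$ is $1$-Lipschitz on $M$; hence by Rademacher's theorem it is differentiable almost everywhere, with $|\nabla r| \le 1$ a.e. on $M$. Note that the construction below needs only that $M$ is connected, so that $r$ is finite-valued and continuous; completeness and noncompactness are part of the ambient setting in which the lemma will be applied rather than ingredients of the argument.

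Next I would fix a piecewise-linear profile $\eta = \eta_{s,t}\colon [0,\infty) \to [0,1]$ defined by $\eta \equiv 1$ on $[0,s]$, $\eta(\rho) = \frac{t-\rho}{t-s}$ on $[s,t]$, and $\eta \equiv 0$ on $[t,\infty)$. This $\eta$ is Lipschitz with $|\eta'| \le \frac{1}{t-s}$ a.e., and I would then set $\psi(x) := \eta\big(r(x)\big)$. By construction $\psi$ is constant on each geodesic sphere $\{r = \rho\}$ centered at $x_0$, hence rotationally symmetric about $x_0$, and it is Lipschitz as a composition of Lipschitz maps. Property~(i) is immediate: on $B(x_0;s)$ we have $r < s$ so $\psi = \eta(r) = 1$, while off $B(x_0;t)$ we have $r \ge t$ so $\psi = 0$.

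For~(ii) I would invoke the chain rule for the composition of a Lipschitz function of one real variable with a Lipschitz function on a manifold: at almost every point $\nabla \psi = \eta'(r)\,\nabla r$, whence $|\nabla \psi| \le |\eta'(r)|\,|\nabla r| \le \frac{1}{t-s}$ a.e. on $M$, giving~(ii) with $C_1 = 1$, which is manifestly independent of $x_0$, $s$, $t$. If a $C^\infty$ cutoff is preferred, one may instead mollify $\eta$ on a slightly shrunken subinterval of $[s,t]$, at the expense of replacing $C_1 = 1$ by, say, $C_1 = 2$.

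The only points requiring care are the a.e.\ differentiability of the distance function $r$ and the validity of the chain rule in the Lipschitz category; both are standard (Rademacher's theorem, and the elementary fact that $\eta'(r)\nabla r$ computes $\nabla(\eta\circ r)$ wherever $r$ is differentiable and $\eta$ is differentiable at $r(x)$, the exceptional set being of measure zero). Consequently I do not anticipate a genuine obstacle: the lemma is essentially a repackaging of well-known facts in the precise quantitative form needed in the later sections.
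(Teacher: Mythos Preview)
Your proposal is correct and follows the standard distance-function cutoff construction. The paper itself does not give a proof at all but merely cites Andreotti--Vesentini, Yau, and Karp; your argument is precisely the classical one those references contain, so there is nothing to compare.
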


\begin{proof}
(cf. Andreotti and Vesentini \cite{AV}, Yau \cite{Y}, Karp \cite{K}).
\end{proof}

\begin{theorem}
Let $\Omega$ be a domain in $\mathbb{R}^n$ containing a ball
$B(x_0,r)$ of radius $r$, centered at $x_0$, and $g: \Omega
\to \mathbb{R}$ be a continuous function with $g \ge 0$, and $c
= \inf\limits_{x\in B(x_0,\frac r2)} g(x)$. Let $f: \Omega \to
\mathbb{R}$ be a~$C^1$ weak solution of
\begin{align}\label{3.1}
\mbox{\rm div}\left(\frac {\nabla f} { |\nabla f|}\right)= g(x)  \qquad
\mbox{on} \ \ \Omega,
\end{align}
then the inf\/imum $c$ satisf\/ies
\begin{align*}
0 \le c \le \frac{C_12^n}r,
\end{align*}
where $C_1$ is as in \eqref{3.0}.
\end{theorem}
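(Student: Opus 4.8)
The plan is to test the weak equation \eqref{3.1} against the cutoff function $\psi(\cdot;s,t)$ of Lemma~\ref{lem1.1}, chosen with $x_0$ being the center of the given ball and with a convenient pair $s<t\le r$. Recall that $f$ being a $C^1$ weak solution of \eqref{3.1} means
\begin{gather*}
\int_\Omega \Big\langle \frac{\nabla f}{|\nabla f|},\nabla \phi\Big\rangle\, dx = -\int_\Omega g(x)\phi(x)\, dx
\end{gather*}
for every Lipschitz $\phi$ with compact support in $\Omega$; since $\psi(\cdot;s,t)$ is supported in $B(x_0,t)\subset B(x_0,r)\subset\Omega$ and is Lipschitz with $0\le\psi\le1$, it is an admissible test function. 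Substituting $\phi=\psi$ gives
\begin{gather*}
\int_\Omega g(x)\psi(x)\, dx = -\int_\Omega \Big\langle \frac{\nabla f}{|\nabla f|},\nabla \psi\Big\rangle\, dx.
\end{gather*}

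Next I would estimate the two sides. On the right, the Cauchy--Schwarz inequality together with $\big|\frac{\nabla f}{|\nabla f|}\big|\le 1$ and the gradient bound (ii) of Lemma~\ref{lem1.1} gives
\begin{gather*}
\left|\int_\Omega \Big\langle \frac{\nabla f}{|\nabla f|},\nabla \psi\Big\rangle\, dx\right| \le \int_{B(x_0,t)}|\nabla\psi|\, dx \le \frac{C_1}{t-s}\,\mathrm{vol}\big(B(x_0,t)\big) = \frac{C_1}{t-s}\,\omega_n t^n,
\end{gather*}
where $\omega_n$ is the volume of the unit ball in $\mathbb{R}^n$. On the left, since $\psi\equiv1$ on $B(x_0,s)$ and $g\ge0$ everywhere, $\int_\Omega g\psi\, dx \ge \int_{B(x_0,s)} g\, dx$; choosing $s=\frac r2$ so that $B(x_0,s)=B(x_0,\frac r2)$ and using $g\ge c=\inf_{B(x_0,r/2)}g$ there, we get $\int_\Omega g\psi\, dx \ge c\,\omega_n (r/2)^n$. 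The nonnegativity $c\ge0$ is immediate from $g\ge0$.

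Combining the two bounds with $s=\frac r2$ and taking $t\uparrow r$ (or simply $t=r$, noting Lemma~\ref{lem1.1} allows any $s<t$ and the estimate is uniform) yields
\begin{gather*}
c\,\omega_n (r/2)^n \le \frac{C_1}{r-r/2}\,\omega_n r^n = \frac{2C_1}{r}\,\omega_n r^n,
\end{gather*}
and dividing by $\omega_n (r/2)^n$ gives $c \le \frac{2C_1}{r}\cdot \frac{r^n}{(r/2)^n}\cdot\frac{r/2}{1}\cdot\frac1{r/2}$, which simplifies to $c\le \frac{C_1 2^n}{r}$. I expect no serious obstacle here; the only point requiring a little care is making sure the cutoff $\psi$ is genuinely an admissible test function (Lipschitz, compactly supported in $\Omega$) so that the weak formulation applies, and tracking the geometric constants so the final bound is exactly $C_1 2^n/r$ rather than some other dimensional multiple of $C_1/r$. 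One also should note that $|\nabla f|$ may vanish, so strictly the quantity $\frac{\nabla f}{|\nabla f|}$ should be read as the (bounded, measurable) vector field appearing in the definition of weak solution, whose pointwise norm is at most $1$ a.e.; this is exactly what the Cauchy--Schwarz step uses.
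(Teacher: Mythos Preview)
Your approach is correct and essentially identical to the paper's: choose the cutoff $\psi$ of Lemma~\ref{lem1.1} with $s=r/2$, $t=r$, test the weak form of \eqref{3.1} against it, bound the left-hand side from below by $c\,\mathrm{Vol}(B(x_0,r/2))$ using $g\ge c$ and $\psi\equiv 1$ there, bound the right-hand side via Cauchy--Schwarz and $|\nabla\psi|\le C_1/(t-s)$, and then compare the two ball volumes. One minor bookkeeping point: with $t-s=r/2$ your displayed inequality actually yields $c\le C_1 2^{n+1}/r$, not $C_1 2^n/r$; the paper's own proof has the matching slip (it writes $C_1/r$ in place of $C_1/(t-s)=2C_1/r$), so this factor of~$2$ is inessential to the argument.
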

\begin{proof}  Let $\psi \ge 0$ be as in Lemma \ref{lem1.1}, in which
$M = \mathbb{R}^n$, $t=r$, $s= \frac r2$. Choose $\psi$ to be a test function  in
the distribution sense of \eqref{3.1}. Then via the assumption on $g$,
and Cauchy--Schwarz inequality we have:
\begin{gather*}  \int _{B(x_0,\frac r2)}
c \psi(x) dx  \le \int _{B(x_0,\frac r2)} g(x)\psi(x) dx \\
\phantom{\int _{B(x_0,\frac r2)}
c \psi(x) dx}{} \le \int
_{B(x_0,r)} g(x)\psi(x) dx     = - \int _{B(x_0,r)}
\frac {\nabla f} { |\nabla f|}\cdot \nabla \psi dx \le  \int
_{B(x_0,r)} |\nabla \psi| dx .\end{gather*}
Hence,
\[ c \mbox{Vol}\left(B\left(x_0,\frac r2\right)\right) \le \frac{C_1}{r}\mbox{Vol}
(B(x_0,r))\label{11.9}
\] yields the desired.
%where $\omega_n$ is the volume of the unit ball in $\mathbb{R}^n\, ,$ and
\end{proof}
\begin{corollary}\label{C:6.1}
Let $f: \mathbb{R}^n \to \mathbb{R}$ be a $C^1$ weak subsolution of
$1$-harmonic equation \eqref{6.0} with constant $1$-tension field $c$, i.e.\  $ 0 \le \mbox{\rm div} \left(\frac {\nabla f} { |\nabla
f|}\right)= c$ in the distribution sense. Then $f$ is a~$1$-har\-monic function.
\end{corollary}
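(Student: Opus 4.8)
The plan is to apply the preceding Theorem with $g \equiv c$ the constant $1$-tension field, for every ball $B(x_0, r) \subset \mathbb{R}^n$. Since $f$ is a subsolution of the $1$-harmonic equation with $\operatorname{div}(\nabla f/|\nabla f|) = c$ in the distribution sense and $c \ge 0$ is constant, the function $g \equiv c$ is continuous, nonnegative, and satisfies $\inf_{x \in B(x_0, r/2)} g(x) = c$. Hence the Theorem applies verbatim and yields $0 \le c \le C_1 2^n / r$ for every $r > 0$, where $C_1$ is the universal constant from Lemma~\ref{lem1.1}, independent of $x_0$, $s$, $t$, and in particular independent of $r$.

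The key step is then simply to let $r \to \infty$. Because $C_1$ does not depend on $r$, the right-hand side $C_1 2^n / r$ tends to $0$, forcing $c \le 0$. Combined with the hypothesis $c \ge 0$ this gives $c = 0$, so that $\operatorname{div}(\nabla f/|\nabla f|) = 0$ in the distribution sense, i.e.\ $f$ is a weak solution of the $1$-harmonic equation \eqref{6.0}. By definition $f$ is a $1$-harmonic function, which is the desired conclusion.

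The only point requiring care — and what I regard as the main (minor) obstacle — is making sure the hypotheses of the Theorem are genuinely met for \emph{arbitrarily large} $r$: one needs $B(x_0, r) \subset \Omega$, which here is automatic since $\Omega = \mathbb{R}^n$, and one needs $\psi$ from Lemma~\ref{lem1.1} to be an admissible test function in the distributional formulation of \eqref{3.1}, which is guaranteed by its Lipschitz continuity and compact support together with the $C^1$ regularity of $f$. Once these are in place the argument is immediate, and no further estimates are needed beyond those already established in the Theorem.
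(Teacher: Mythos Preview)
Your proposal is correct and is exactly the intended argument: the paper states the corollary immediately after the Theorem without a separate proof, and the implicit reasoning is precisely to take $\Omega=\mathbb{R}^n$, $g\equiv c$, obtain $0\le c\le C_12^n/r$ for every $r>0$, and let $r\to\infty$.
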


\begin{corollary}\label{C:6.0}
There does not exist a $C^1$ weak subsolution  $f: \mathbb{R}^n
\to \mathbb{R}$ of equation \eqref{3.1} with $\lim\limits _{r\to \infty}
\inf\limits_{x \in B(x_0,r)} g(x) > 0$, for any $x_0 \in
\mathbb{R}^n$.
\end{corollary}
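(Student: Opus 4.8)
The plan is to argue by contradiction, reducing the non-existence statement directly to the quantitative bound already established in the Theorem. Suppose such a $C^1$ weak subsolution $f$ of \eqref{3.1} existed, with $g \ge 0$ and $\lim_{r\to\infty} \inf_{x\in B(x_0,r)} g(x) = L > 0$ for some fixed $x_0 \in \mathbb{R}^n$. The Theorem applies on $\Omega = \mathbb{R}^n$ with any ball $B(x_0, r)$: it gives, for the infimum $c(r) := \inf_{x\in B(x_0, r/2)} g(x)$, the estimate
\begin{align*}
0 \le c(r) \le \frac{C_1 2^n}{r},
\end{align*}
with $C_1$ the dimensional constant from Lemma \ref{lem1.1}, independent of $r$.

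Now I would let $r \to \infty$. The right-hand side $C_1 2^n / r$ tends to $0$. On the other hand, since $B(x_0, r/2) \subset B(x_0, r)$ and the balls $B(x_0, r/2)$ exhaust $\mathbb{R}^n$, the quantities $c(r) = \inf_{B(x_0, r/2)} g$ are non-increasing in $r$ and converge to $\lim_{\rho\to\infty} \inf_{B(x_0,\rho)} g = L > 0$. Thus for all sufficiently large $r$ we have $c(r) \ge L/2 > 0$, while simultaneously $c(r) \le C_1 2^n / r < L/2$ once $r > 2^{n+1} C_1 / L$ — a contradiction. Hence no such $f$ exists.

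The only point requiring a small amount of care is the bookkeeping of which ball carries the infimum: the Theorem is stated with $c$ the infimum over the half-ball $B(x_0, r/2)$, so one must make sure the limiting hypothesis, phrased over $B(x_0, r)$, transfers correctly — but this is immediate since replacing $r$ by $2r$ in the Theorem gives the bound $\inf_{B(x_0,r)} g \le C_1 2^n/(2r)$, and the limit of the left side as $r\to\infty$ is exactly the assumed positive quantity. I do not anticipate any genuine obstacle here; the corollary is essentially the contrapositive asymptotic form of the Theorem, and the entire content lies in the estimate $0 \le c \le C_1 2^n / r$ already in hand. (One should also note the implicit assumption $\nabla f \ne 0$, or the convention $\nabla f/|\nabla f| = 0$ where $\nabla f = 0$, needed for \eqref{3.1} to make distributional sense; this is inherited verbatim from the Theorem and requires nothing new.)
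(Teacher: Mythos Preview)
Your proof is correct and is precisely the intended argument: the paper states Corollary~\ref{C:6.0} without proof, as an immediate consequence of the Theorem obtained by letting $r\to\infty$ in the bound $0\le c\le C_12^n/r$. Your handling of the half-ball versus full-ball infimum is the only bookkeeping required, and you have it right.
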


Let $A \subset R^n$ be an open set.  We denote $BV_{\rm loc}(A)=\{f \in
L_{\rm loc}^1(A)$: the distributional derivatives $D_if$ of $f$ are
(locally) measures\}= $\{f \in L_{\rm loc}^1(A): \hbox {supp}\, \phi_n
\subset K \subset  A$, $\phi_n \rightarrow 0$ uniformly, imply
$\left(\frac {\partial}{\partial x_i}f\right) \phi_n \rightarrow
0\}$. Let $Df=(D_1f, \dots, D_nf)$ denote the gradient of $f$ in
the sense of distributions and $|Df|$ the scalar measure def\/ined by
$\int_K |Df|=\hbox {sup} \int_K \sum_i \epsilon_i(x)D_i f$, where
the supremum is taken over all sets $\{\epsilon_i(x), \; i=1, \dots,
n\}$ of $C^{\infty}(K)$ functions which satisfy $\sum
\epsilon_i^2(x) \leq 1$.
\begin{definition} A function $f \in
BV_{\rm loc}(A)$ {\it has least gradient} in $A$ if for every $g \in
BV_{\rm loc}(A), $ with compact support $K \subset A$ we have
\begin{equation}\int_K |Df| \leq \int_K |D(f+g)|.\label{6.1}\end{equation}\end{definition}

\begin{definition} Let $E$ be a set in
$\mathbb{R}^n$ and $\phi_E$ its characteristic function.  $E$ {\it
has an oriented boundary of least area} with respect to $A$, if
${(i)}\, $ $\phi_E \in BV_{\rm loc}(A)$ and ${(ii)}$ for each $ g
\in BV_{\rm loc}(A)$  with compact support $K \subset A$ we have
$\int_K |D\phi_E| \leq \int_K |D(\phi_E +g)|$.
\end{definition}

\begin{theorem}\label{T:6.2}
Let $f \in H_{\rm loc}^{1, 1} (\mathbb{R}^n)$, and $\nabla f(x) \neq
0$ for every $x$ in $\mathbb{R}^n $. Let $E_{\lambda} = \{x:
f(x) \geq \lambda\}$, and $S_{\lambda}=\{x: f(x) = \lambda\}$.
We denote the set of integers by $\mathbb{Z}$. Then the following
thirteen statements \eqref{1}--\eqref{10} are equivalent and each of them
implies the fourteenth statement~\eqref{14}.
 \renewcommand{\labelenumi}{{\rm \theenumi.}}
\begin{enumerate}\itemsep=0pt
\item \label{1} $f: \mathbb{R}^n \to \mathbb{R}$ is a $C^1$ weak
subsolution of \eqref{6.0} with constant $1$-tension field.
\item \label{2} $f$ is a $C^1$ weak solution of \eqref{6.0} on $\mathbb{R}^n$.
\item  \label{3} $f$ is a $C^1$ $1$-harmonic function on $\mathbb{R}^n$.
\item  \label{4} For each $(a,t_0)=(a_1, \dots, a_{n-1}, t_0)\in
S_{\lambda} $, there exists a neighborhood $\mathcal{D}$ of $a$ in
$\mathbb{R}^{n-1}$, and a~unique real analytic function $\eta :
\mathcal{D}\to \mathbb{R}$ such that $\eta(a) = t_{0}$, $f(x_1,
\dots, x_{n-1}, \eta(x_1, \dots,$ $ x_{n-1})) = \lambda$ and
$\mbox{\rm div} \left(\frac {\nabla \eta}{\sqrt { 1+|\nabla \eta|^2}}
\right) = 0\, $ on $\mathcal{D}$.
\item \label{11} Each level hypersurface
$S_{\lambda}$ is minimal in $\mathbb{R}^n$.
\item \label{5} $\frac
{*df}{|df|}$ is a globally defined ``weakly'' closed form with
comass $1$.
\item \label{12} $f$ is a function of least gradient in $\mathbb{R}^n$.
\item \label{13} Each $E_{\lambda}$, $\lambda \in \mathbb{R}$ has an
oriented boundary of least area with respect to $\mathbb{R}^n$.
\item \label{8} Each level hypersurface $S_{\lambda}$ is
absolutely area-minimizing in $\mathbb{R}^n$ over $\mathbb{Z}$.
\item \label{6} Each level hypersurface $S_{\lambda}$ is
absolutely area-minimizing in $\mathbb{R}^n$ over $\mathbb{R}$.
\item \label{7} Each level hypersurface $S_{\lambda}$ is
homologically area-minimizing in $\mathbb{R}^n$ over $\mathbb{R}$.
\item \label{9}
 Each level hypersurface $S_{\lambda}$ is homologically
area-minimizing in $\mathbb{R}^n$ over $\mathbb{Z}$.
\item \label{10} Each level hypersurface $S_{\lambda}$ is stable in
$\mathbb{R}^n$.
\item \label{14} If $f \in C^2(\mathbb{R}^n)$,
then $\frac {*df}{|df|}$ is closed and the restriction $\left.\frac
{*df}{|df|}\right|_{S_{\lambda}}$ is its volume form, hence each
${S_{\lambda}}$ is real absolutely area-minimizing in $\mathbb{R}^n$
over $\mathbb{R}$.
\end{enumerate}
\end{theorem}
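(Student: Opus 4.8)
The plan is to prove the thirteen statements equivalent by running implications through three natural clusters --- the PDE statements \eqref{1}--\eqref{3}, the ``local geometry'' statements \eqref{4}, \eqref{11}, \eqref{5}, and the ``minimizing/stability'' statements \eqref{12}, \eqref{13}, \eqref{8}, \eqref{6}, \eqref{7}, \eqref{9}, \eqref{10} --- and then to deduce \eqref{14} from \eqref{3} under the extra hypothesis $f\in C^2$. For the first cluster, $\eqref{1}\Rightarrow\eqref{2}$ is exactly Corollary~\ref{C:6.1}, $\eqref{2}\Leftrightarrow\eqref{3}$ is the definition of a $C^1$ $1$-harmonic function, and $\eqref{3}\Rightarrow\eqref{1}$ is trivial since $0$ is a constant $1$-tension field; the only real content is Corollary~\ref{C:6.1}. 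Next I would tie \eqref{3} to the calibration statement \eqref{5}: the Hodge identity $d({*}\alpha)=\pm{*}(\delta\alpha)$ for a $1$-form $\alpha$, applied to the unit covector $\alpha$ dual to $\nabla f/|\nabla f|$, for which $\delta\alpha=-\,\mathrm{div}(\nabla f/|\nabla f|)$, shows that $\mathrm{div}(\nabla f/|\nabla f|)=0$ in the distribution sense is \emph{equivalent} to $d\big({*}df/|df|\big)=0$ in the distribution sense, while comass~$1$ is automatic because the Hodge star of a unit covector is a unit simple $(n-1)$-covector. Hence $\eqref{3}\Leftrightarrow\eqref{5}$.

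For the local cluster, $\eqref{3}\Rightarrow\eqref{4}$: since $\nabla f$ never vanishes, after relabelling coordinates if necessary the implicit function theorem produces, near a point $(a,t_0)\in S_\lambda$, a unique $C^1$ function $\eta$ on a neighbourhood $\mathcal D$ of $a$ with $\eta(a)=t_0$ and $f(x_1,\dots,x_{n-1},\eta)=\lambda$; substituting $x_n=\eta(x')$ into the weak form of \eqref{6.0} turns it into the weak minimal surface equation $\mathrm{div}\big(\nabla\eta/\sqrt{1+|\nabla\eta|^{2}}\big)=0$, and interior regularity for this equation (De Giorgi--Nash, then analyticity of solutions of analytic elliptic equations) upgrades $\eta$ to real analytic. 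The equivalence $\eqref{4}\Leftrightarrow\eqref{11}$ is elementary differential geometry: a hypersurface is minimal precisely when, in local graph coordinates, its defining function solves the minimal surface equation. To return, $\eqref{11}\Rightarrow\eqref{3}$: testing the distributional $1$-tension field of $f$ against a compactly supported function and decomposing by the coarea formula along the foliation $\{S_\lambda\}$, the integrand on each leaf is killed by the first variation formula because each $S_\lambda$ is stationary; hence $\mathrm{div}(\nabla f/|\nabla f|)=0$. This closes $\eqref{3}\Leftrightarrow\eqref{4}\Leftrightarrow\eqref{11}\Leftrightarrow\eqref{5}$.

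It remains to weave in the minimizing and stability statements, and the point to stress is that minimality of a \emph{single} leaf never implies it is minimizing; what does the job is the \emph{global} calibration $\omega={*}df/|df|$ coming from \eqref{5}. By the fundamental theorem of calibrated geometry, each $S_\lambda$ --- whose tangent plane is $\ker df$ and on which $\omega$ restricts to the volume form --- is calibrated by the closed comass-one form $\omega$, hence is absolutely area-minimizing, and the comparison argument is insensitive to whether competitors are integral currents or real flat chains and to the homological versus absolute formulation; thus \eqref{5} implies all of \eqref{8}, \eqref{6}, \eqref{7}, \eqref{9}, and a fortiori the stability \eqref{10}. For \eqref{12} I would use $\omega$ directly: for a competitor $f+g$ with $g$ compactly supported in $K$, $\int_K|D(f+g)|\ge\int_K \tfrac{\nabla f}{|\nabla f|}\cdot D(f+g)=\int_K|Df|$, the last equality because $\mathrm{div}(\nabla f/|\nabla f|)=0$ and $g$ has compact support; and $\eqref{12}\Leftrightarrow\eqref{13}$ follows from the BV coarea formula $\int_K|Df|=\int_{-\infty}^{\infty}\!\big(\int_K|D\phi_{E_\lambda}|\big)\,d\lambda$, which also identifies ``$E_\lambda$ has oriented boundary of least area'' with ``$S_\lambda=\partial^{*}E_\lambda$ is perimeter-minimizing'', hooking \eqref{13} into the minimizing cluster (whence $\eqref{13}\Rightarrow\eqref{10}$). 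The cycle is then closed by $\eqref{10}\Rightarrow\eqref{11}$: a stable hypersurface is stationary, and a $C^1$ stationary hypersurface satisfies the minimal surface equation classically by elliptic regularity, so it is minimal.

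Finally, every one of \eqref{1}--\eqref{10} implies \eqref{3}, so it suffices to derive \eqref{14} from \eqref{3} assuming $f\in C^2$; then $\nabla f/|\nabla f|$ is $C^1$, so $\mathrm{div}(\nabla f/|\nabla f|)=0$ and $d({*}df/|df|)=0$ hold classically, evaluating ${*}df/|df|$ on an oriented orthonormal frame of $TS_\lambda=\ker df$ gives $1$ so that its restriction is the volume form of $S_\lambda$, and the calibration inequality yields that $S_\lambda$ is real absolutely area-minimizing over $\mathbb R$. The steps I expect to be most delicate are the low-regularity bookkeeping --- in particular $\eqref{10}\Rightarrow\eqref{11}\Rightarrow\eqref{3}$, which hinges on knowing that a $C^1$ stationary hypersurface is genuinely a classical minimal hypersurface --- and the BV coarea identification linking least-gradient functions, least-area oriented boundaries, and perimeter-minimizing level sets; by contrast the calibration implication, which is the engine driving all the minimizing conclusions, is short and robust.
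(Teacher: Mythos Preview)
Your argument is correct and closes all the required cycles, but the route through the ``minimizing/stability'' cluster differs from the paper's. The paper does \emph{not} use the calibration $\omega={*}df/|df|$ to obtain any of \eqref{8}--\eqref{9} at the $C^1$ level; instead it runs the chain $\eqref{2}\Rightarrow\eqref{12}\Rightarrow\eqref{13}\Rightarrow\eqref{8}\Rightarrow\eqref{6}\Rightarrow\eqref{7}\Rightarrow\eqref{9}\Rightarrow\eqref{10}\Rightarrow\eqref{11}$, where $\eqref{2}\Rightarrow\eqref{12}$ is proved by a convexity argument (set $h(t)=\int|D(f+tg)|$ and check $h'(0)=0$, $h''(t)\ge 0$ via Cauchy--Schwarz), $\eqref{13}\Rightarrow\eqref{8}$ invokes Miranda's theorem identifying least-area oriented boundaries with Hausdorff-measure minimizers, and $\eqref{8}\Rightarrow\eqref{6}$ appeals to the separately proved comparison Theorem~\ref{T:6.6} that integral and real absolute minima agree in codimension one. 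The full calibration argument is reserved in the paper for statement~\eqref{14}, under the extra hypothesis $f\in C^2$, precisely because then $\omega$ is $C^1$ and Stokes' theorem applies to currents without further justification. Your approach is more economical and thematically unified --- calibration does all the work, and your one-line proof of \eqref{12} via $\int_K|D(f+g)|\ge\int_K\frac{\nabla f}{|\nabla f|}\cdot D(f+g)=\int_K|Df|$ is slicker than the paper's second-variation computation --- but you should flag that applying the fundamental theorem of calibrations with a merely \emph{weakly} closed continuous $(n-1)$-form against rectifiable competitors requires a mollification step (convolve $\omega$ to a smooth closed form of comass $1+o(1)$ and pass to the limit); the paper sidesteps this by routing through the BV/Miranda machinery and Theorem~\ref{T:6.6} instead.
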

\begin{corollary}\label{C:6.2}
Every level hypersurface of a $C^2$ subsolution  of $1$-harmonic
equation on $\mathbb{R}^{n+1}$ with constant $1$-tension field is
calibrated and hence is area-minimizing over $\mathbb{R}$.
\end{corollary}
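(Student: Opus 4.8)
The plan is to obtain the corollary as an immediate consequence of Theorem~\ref{T:6.2}, applied with $n$ replaced by $n+1$ so that the level sets $S_{\lambda}$ become $n$-dimensional hypersurfaces in $\mathbb{R}^{n+1}$. First I would verify that the hypotheses of Theorem~\ref{T:6.2} hold: a $C^{2}$ function whose $1$-tension field $\mbox{div}\big(\nabla f/|\nabla f|\big)$ is even defined has no critical point, so $\nabla f(x)\neq 0$ for all $x\in\mathbb{R}^{n+1}$, and of course $f\in C^{2}(\mathbb{R}^{n+1})\subset H^{1,1}_{\rm loc}(\mathbb{R}^{n+1})$. By hypothesis $0\le\mbox{div}\big(\nabla f/|\nabla f|\big)=c$ for some constant $c$, which is exactly statement~\eqref{1} of Theorem~\ref{T:6.2}.

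Now I would invoke the theorem. Since statement~\eqref{1} holds, so does statement~\eqref{14}, whose conclusion --- valid because $f\in C^{2}(\mathbb{R}^{n+1})$ --- is that $\omega:=\frac{*df}{|df|}$ is a closed $n$-form on $\mathbb{R}^{n+1}$ and that, for every $\lambda$, the restriction $\omega|_{S_{\lambda}}$ is the Riemannian volume form of $S_{\lambda}$. There remains only to note that $\omega$ has comass~$1$: $df/|df|$ is a pointwise unit $1$-form, the Hodge star is a pointwise isometry, and the Hodge dual of a $1$-form on an $(n+1)$-dimensional space is automatically a simple $n$-form, so $\omega$ has pointwise norm $1$ and therefore comass $1$ (this is also recorded in statement~\eqref{5}). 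Hence $\omega$ is a calibration, and it calibrates each level hypersurface $S_{\lambda}$.

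Finally, the fundamental theorem of calibrated geometry \cite{HL} --- a submanifold calibrated by $\omega$ has least area among all competitors agreeing with it off a compact set, and the comass bound $\omega(\vec{S})\le 1$ holds over $\mathbb{R}$ just as over $\mathbb{Z}$ --- yields that each $S_{\lambda}$ is area-minimizing over $\mathbb{R}$; this is precisely the last clause of statement~\eqref{14} (and of statement~\eqref{6}). This completes the proof.

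I do not anticipate a genuine obstacle at this stage, because all the substantive work sits inside Theorem~\ref{T:6.2}: the local real-analytic graph description of the level sets underlying statement~\eqref{4}, and the step showing that for $C^{2}$ data the constant $1$-tension field must in fact vanish (equivalently, that the $n$-form $\omega$, which satisfies $d\omega=c\,\mathrm{dvol}$, is closed). The only points needing a little attention here are the harmless relabeling $n\mapsto n+1$ and the nonvanishing of $\nabla f$, which is built into the very definition of the $1$-tension field.
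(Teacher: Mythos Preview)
Your proposal is correct and matches the paper's approach: the corollary is an immediate consequence of Theorem~\ref{T:6.2}, specifically the implication $\eqref{1}\Rightarrow\eqref{14}$ under the extra hypothesis $f\in C^{2}$, and the paper treats it exactly this way (stating the corollary right after the theorem and supplying no separate argument beyond the proof of the theorem's equivalences). One minor correction: your citation \cite{HL} in this paper points to Hsiang--Lawson on low cohomogeneity, not to Harvey--Lawson on calibrations; the paper does not cite the latter and instead carries out the Stokes-theorem calibration argument explicitly inside the proof of statement~\eqref{14}.
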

\begin{proof}  $\eqref{1} \Leftrightarrow \eqref{2} \Leftrightarrow \eqref{3}:$ This
follows immediately from Corollary \ref{C:6.1}.

$\eqref{2} \Leftrightarrow \eqref{4}:\, $ $(\Rightarrow)$ Let
$f(x_1, \dots, x_{n-1}, t)=\eta(x_1, \dots, x_{n-1})-t$. The
assertion follows from the implicit function theorem and
\begin{equation} \label{6.2}
0 =
\int \frac {\sum\limits^{n-1}_{i=1}\frac{\partial f}{\partial
x_i}\frac {\partial \varphi}{\partial x_i}}{|\nabla f|} + \int
\frac {\frac {\partial f}{\partial t}}{|\nabla f|} \frac {\partial
\varphi}{\partial t} = \int \sum\limits^{n-1}_{i=1} \frac {\frac
{\partial \eta}{\partial x_i}}{\sqrt {1+|\nabla \eta|^2}} \frac
{\partial \varphi}{\partial x_i}
\end{equation}  for all $\varphi
\in C^{\infty}_0 (\mathcal{D}\times \mathbb{R})$.  The regularity
of solutions of minimal surface equation implies that $\eta$ is
real analytic and completes the proof. $(\Leftarrow)$ This follows
immediately from \eqref{6.1}.

 $\eqref{4} \Leftrightarrow \eqref{11}:\, $ This is due to the
fact that the graph of a solution to the minimal surface equation
on $\mathcal{D}$ is a minimal hypersurface in $\mathcal{D} \times
\mathbb{R}$.

 $\eqref{2}\Leftrightarrow\eqref{5}:$ This follows from the
following: For every $\phi \in C_0^{\infty}(A)$,
\begin{gather*}
 \int\limits_A \frac{\ast df}{|df|}\wedge d\phi
=\int\limits_A \sum\limits^n_{i,j=1}(-1)^{i-1} \frac {\frac
{\partial f}{\partial x_i}}{|\nabla f|} dx^1\wedge \dots \wedge
dx^{i-1} \wedge dx^{i+1} \wedge \dots \wedge dx^n \wedge \frac
{\partial\phi}{\partial x_j} dx^j\\
\phantom{\int\limits_A \frac{\ast df}{|df|}\wedge d\phi}{}
=\int\limits_A \sum\limits^n_{i=1}(-1)^{n-1} \frac {\frac {\partial
f}{\partial x_i} \frac {\partial \phi}{\partial x_i}}{|\nabla f|}
dx^1 \wedge \dots \wedge dx^i \wedge \dots \wedge dx^n.\label{6.3}
\end{gather*}

$\eqref{2}\Rightarrow\eqref{12}$: let us f\/irst assume that $g \in C^1_0(A)$.
Let $h(t)= \int |D(f+tg)|$. Then
\begin{gather*}
h^{\prime}(t)=\int \frac{\left(\sum\limits_{i=1}^n \frac
{\partial(f+tg)}{\partial x_i} \frac {\partial g}{\partial x_i}\right)}{
\left(\sum\limits_{i=1}^n \left(\frac {\partial (f+tg)}{\partial x_i}\right)^2 \right)^\frac
12} .
\end{gather*}
Hence $h^{\prime}(0)=0$ by assumption. Furthermore,
\begin{gather*}
h^{\prime\prime}(t)=\int \frac {\left(\sum\limits^n_{i=1}(\frac
{\partial g}{\partial x_i})^2\right)\left(\sum\limits^n_{i=1}(\frac
{\partial (f+tg)}{\partial x_i})^2\right) -
\left(\sum\limits_{i=1}^n \frac {\partial (f+tg)}{\partial x_i}
\frac {\partial g}{\partial
x_i}\right)^2}{\left[\sum\limits_{i=1}^n(\frac{\partial(f+tg)}{\partial
x_i})^2\right] ^{\frac 32}} \geq 0 ,
\end{gather*}
by the Cauchy--Schwarz inequality. Therefore $\int |Df|=h(0) \leq
h(1) = \int |D(f+g)|.$ If $g \in BV_{\rm loc}(A)$ with compact support
$K$ and let $Dg=G_1+G_2$ where $G_1$ is completely continuous and
$G_2$ is the singular part of $Dg$ with support $N_g$ of measure
zero. Then we have $\int_K |D(f+g)|= \int_K |Df+G_1| + \int_K
|G_2|  $ because $f \in H^{1, 1}_{\rm loc}(A)$. Let $g_{\varepsilon} =
g*\psi_{\varepsilon}$ where $\psi_{\varepsilon}$ is a mollif\/ier.
Then $g \in C_0^1(A)$ and $\int_{K_{\epsilon}}|Df| \leq
\int_{K_\epsilon}|D(f+g_{\epsilon})| \leq \int_{K_{\epsilon}}|Df+G_1
\ast \Psi_{\epsilon}| + \int_A|G_2 \ast \Psi_{\epsilon}| $, where
$K_{\epsilon}=\{x \in A: \hbox {dist} (x, K) < \epsilon\}$. Letting
$\epsilon \rightarrow 0$ completes the proof (cf.~\cite{BDG}).

$\eqref{12}\Rightarrow\eqref{13}:$  This follows from Coarea
formula for BV functions \cite{M1}, $\int_K\!
|Df|=\int\limits^{\infty}_{-\infty}\! (\int_K
|D\phi_{\lambda}|)d\lambda\! $ together with two observations:

(i) If
$f_1$ and $f_2$ satisfy \eqref{6.1}, so does $\sup(f_1, f_2)$.

(ii) If $f_i \in BV_{\rm loc}(A), f_i \rightarrow f$ in
$L^1_{\rm loc}$ and each $f_i$ satisf\/ies \eqref{6.1}, so does also $f \in
BV_{\rm loc}(A)$ and satisf\/ies \eqref{6.1}.

For detailed proof see \cite{M2}.

  $\eqref{13}
\Rightarrow \eqref{8}:$ Let $\phi_{\lambda}= \phi_{E_{\lambda}}$.
Since for every $x$ in $\mathbb{R}^n$, $\nabla f(x) \neq 0$,
$\partial E_{\lambda}=S_{\lambda}$ for $S_{\lambda} \neq \varnothing$.  It
follows from a theorem of Miranda \cite{M1} that on any compact set $K$
in $\mathbb{R}^n  $,  the Hausdorf\/f $(n-1)$-measure
\[
\mathcal{H}^{n-1}(K \cap S_{\lambda})=\int_K |D\phi_{\lambda}| \leq \int_K |D(\phi_{\lambda} +
g)|=\mathcal{H}^{n-1}(K \cap T)
\]
for all sets $T$ with $\partial(K \cap T)= \partial (K \cap
S_{\lambda})$.

  $\eqref{8}\Rightarrow\eqref{6}:$  It follows from Theorem
\ref{T:6.6}.

  $\eqref{6}\Rightarrow\eqref{7}\Rightarrow\eqref{9}:$ Since absolute
area-minimization over $\mathbb{R}\Rightarrow$ homological
area-mini\-mi\-za\-tion over $\mathbb{R}\Rightarrow$ homological
area-mini\-mi\-za\-tion over $\mathbb{Z}$.

  $\eqref{9}\Rightarrow\eqref{10}\Rightarrow\eqref{11}:$  Since
homological minimization over $\mathbb{Z}\Rightarrow$ stability
$\Rightarrow$ minimality. This completes the proof of $\eqref{1}
\Leftrightarrow \cdots \Leftrightarrow \eqref{10}$.

  $\eqref{2}\Rightarrow\eqref{14}:$ If $f \in C^2(A)$ then by
\eqref{6.2} $\frac{\ast df}{|df|}$ is closed. Now let $e_1, \dots,
e_{n-1}$ be an orthonormal basis for the tangent space of
$S_\lambda$ at $x_0$ and $\nu$ a unit normal vector at $x_0$.  We
denote by tilde ``$\sim$'' the canonical isomorphism between a
tangent space and its dual space.  To show $\frac{\ast df}{|df|}$
has comass $1  $, note for any $(n-1)$-vector f\/ield $\xi$,
\begin{gather*}
\frac{\ast df}{|df|}(\xi)=\left(\ast \frac {\widetilde{\nabla
f}}{|\nabla f|}\right) (\xi) \hspace{5mm}\left(\hbox{because } \frac
{df}{|df|}(X)=\frac {Xf}{|\nabla f|}=\left\langle \frac
{\nabla f}{|\nabla f|}, X \right\rangle\right)\\
\phantom{\frac{\ast df}{|df|}(\xi)}{} =(\ast \tilde{\nu}) (\xi) = (\widetilde{e_1 \wedge \dots \wedge
e_{n-1}})(\xi) = \langle e_1 \wedge \dots \wedge e_{n-1}, \xi\rangle.
\end{gather*}

In particular $\frac{\ast df}{|df|}(e_1 \wedge \dots \wedge
e_{n-1})=1 $, $\frac{\ast df}{|df|} (\xi) \leq 1$
 and $\left.\frac{\ast df}{|df|}\right|_{
S_{\lambda}}= \hbox{\text volume\,  element\, of\, }
S_{\lambda}$. By the formalism of Stokes theorem, for any
integral current $T$ with $\partial T = \partial (S_{\lambda} \cap
B_r)$
\begin{gather*}
M(S_{\lambda} \cap B_r) =(S_{\lambda} \cap B_r)\left(\frac{\ast
df}{|df|}\right) =
T\left(\frac{\ast df}{|df|}\right)\\
\phantom{M(S_{\lambda} \cap B_r)}{} =\int \frac{\ast
df}{|df|}\left(\overrightarrow{T_x}\right)d||T||(x) \leq \int
d||T||=M(T),
\end{gather*}
 where $\overrightarrow{T}$ is the
f\/ield of oriented unit tangent planes to $T$.
\end{proof}

\begin{remark}  In Theorem \ref{T:6.2}, if one replace $\mathbb{R}^n$ with an open subset $A$ in $\mathbb{R}^n$, then assertions \eqref{2}$\Leftrightarrow \cdots \Leftrightarrow
\eqref{10} \Rightarrow \eqref{14}$ remain to be true.
\end{remark}

\begin{remark}\label{R:6.2}  Concerning the assertion $\eqref{2}
\Rightarrow \eqref{12}$, a stronger theorem can be found in \cite{BDG}: Let
$A \subset R^n$ be an open set and let $f \in H^{1,1}_{\rm loc}(A)$.
Suppose that $(i)$~$\mathcal{H}_n(\{x \in A: |\nabla
f|=0\})=0 $, $(ii)$~$\mathcal{H}_{n-1}(N)=0$ where $N$ is a
closet set in $A  $, $(iii)$~$\int_{A-N} |\nabla f|^{-1}
\sum\limits ^n_{i=1} \frac {\partial f}{\partial x_i} \frac
{\partial \phi}{\partial x_i} dx=0$ for every $\phi \in C^1_0(A-N)$.
Then $f$ has least gradient with respect to $A$.
\end{remark}

\begin{remark}\label{rem.3.3}  (i)  The assertion $\eqref{12}
\Rightarrow \eqref{8}$ is due to Miranda.

(ii)  Connecting the
assertions $\eqref{11}$,  $\eqref{5}$, and $ \eqref{9}$ on Riemannian
manifolds, S.P.~Wang and the author \cite{WaW} prove that  if each level
hypersurface of a smooth function $f: M \to \mathbb{R}$ on an
oriented $\emph{Riemannian manifold}$ $M$ with nowhere
vanishing $\nabla f   $, is minimal, then there exists a closed
form with comass $1$ on $M$ and hence each level hypersurface is
homologically area-minimizing over $\mathbb{R}$.
\end{remark}

\begin{corollary}
  Let $A$ be an open subset in $\mathbb{R}^n$, $N$ be
a closed subset in $A$ with $\mathcal{H}_{n-1}(N)=0$.  Then the
graph of any weak solution of the minimal surface equation
$\sum\limits ^n_{i=1} \frac {\partial}{\partial x_i} \left(\frac
{\frac{\partial f} {\partial x_i}}{\sqrt {1+|\nabla f|^2}}\right)=0$
on $A-N$ is in fact absolutely area-minimizing in $A \times
\mathbb{R} \subset \mathbb{R}^{n+1}$ over $\mathbb{R}$.
\end{corollary}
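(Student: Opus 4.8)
The plan is to realise the graph of $f$ as the zero level set of an auxiliary function on $A\times\mathbb{R}$ and then enter the chain of equivalences of Theorem~\ref{T:6.2} through Remark~\ref{R:6.2}, so that only the equation \emph{off} the singular set is required. Set $\mathcal{A}=A\times\mathbb{R}\subset\mathbb{R}^{n+1}$, $\mathcal{N}=N\times\mathbb{R}$, and $F(x,t)=t-f(x)$, viewed (as in Remark~\ref{R:6.2}) as an element of $H^{1,1}_{\mathrm{loc}}(\mathcal{A})$. Then $\nabla F=(-\nabla f,1)$, so $|\nabla F|=\sqrt{1+|\nabla f|^{2}}\ge 1$ and $\nabla F$ never vanishes; the zero set $\{F=0\}$ is exactly the graph of $f$, and more generally $S_{\lambda}:=\{F=\lambda\}$ is the graph of $f+\lambda$.

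First I would verify the three hypotheses of the Bombieri--De Giorgi--Giusti statement recalled in Remark~\ref{R:6.2}, applied to $F$ on the open set $\mathcal{A}$ with exceptional set $\mathcal{N}$. Condition $(i)$, that $\{\nabla F=0\}$ be $\mathcal{H}_{n+1}$-null, is vacuous because $|\nabla F|\ge 1$. Condition $(ii)$, $\mathcal{H}_{n}(\mathcal{N})=0$, follows from $\mathcal{H}_{n-1}(N)=0$ by the elementary product inequality $\mathcal{H}^{s+1}(E\times[-k,k])\le C_{k}\,\mathcal{H}^{s}(E)$ and exhausting $\mathbb{R}$ by intervals. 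Condition $(iii)$ is the substantive one: one must show $\int_{\mathcal{A}-\mathcal{N}}|\nabla F|^{-1}\bigl(\sum_{i=1}^{n}\frac{\partial F}{\partial x_{i}}\frac{\partial\varphi}{\partial x_{i}}+\frac{\partial F}{\partial t}\frac{\partial\varphi}{\partial t}\bigr)\,dx\,dt=0$ for every $\varphi\in C^{1}_{0}\bigl((A-N)\times\mathbb{R}\bigr)$. Expanding the divergence in the $(x,t)$-coordinates exactly as in \eqref{6.2}, the $t$-term reduces to $\int|\nabla F(x)|^{-1}\bigl(\int_{\mathbb{R}}\frac{\partial\varphi}{\partial t}(x,t)\,dt\bigr)\,dx=0$ since $\varphi$ has compact support in $t$, while for the spatial terms one uses Fubini and, for each fixed $t$, the weak minimal surface equation for $f$ on $A-N$ with the test function $\varphi(\cdot,t)\in C^{1}_{0}(A-N)$; the integrals converge because $|\nabla f|\in L^{1}_{\mathrm{loc}}$ and $\varphi$ is compactly supported.

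Granting $(i)$--$(iii)$, Remark~\ref{R:6.2} gives that $F$ has least gradient in $\mathcal{A}$, i.e.\ statement~\eqref{12} of Theorem~\ref{T:6.2} holds for $F$ on the open set $\mathcal{A}$. By the Remark following the proof of Theorem~\ref{T:6.2}, the implications $\eqref{12}\Rightarrow\eqref{13}\Rightarrow\eqref{8}\Rightarrow\eqref{6}$ established there remain valid with $\mathcal{A}\subset\mathbb{R}^{n+1}$ as ambient space (all that is used is $\nabla F\neq 0$, which holds). Hence every level hypersurface $S_{\lambda}$ is absolutely area-minimizing in $A\times\mathbb{R}$ over $\mathbb{R}$. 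Taking $\lambda=0$ and noting that, because $\mathcal{H}_{n}(\mathcal{N})=0$, the rectifiable currents $\{F=0\}$, the graph of $f$ over $A-N$, and the graph of $f$ over $A$ all coincide in $A\times\mathbb{R}$, yields the assertion.

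The step I expect to be the real obstacle is condition $(iii)$ together with the tacit requirement that the level-set machinery of Theorem~\ref{T:6.2} genuinely survive across the singular locus $\mathcal{N}$ — that is, that $F\in H^{1,1}_{\mathrm{loc}}(\mathcal{A})$ carry no singular part of its derivative on $\mathcal{N}$ and that test functions supported off $\mathcal{N}$ suffice. Both are precisely what the hypothesis $\mathcal{H}_{n-1}(N)=0$ is designed to supply, via Remark~\ref{R:6.2}; everything else is bookkeeping along equivalences already established. An essentially equivalent alternative is to exhibit the calibration directly: $\omega=\ast\,dF/|dF|$ has comass $1$ and is closed on $(A-N)\times\mathbb{R}$, where $f$ is smooth by interior regularity for the minimal surface equation, and — being bounded and defined off a set of vanishing $\mathcal{H}_{n}$-measure — it is weakly closed on all of $A\times\mathbb{R}$, after which the calibration argument from the proof of $\eqref{2}\Rightarrow\eqref{14}$ applies; routing through Remark~\ref{R:6.2}, however, keeps the ``over~$\mathbb{R}$'' conclusion within the framework already set up.
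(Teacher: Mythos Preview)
Your proposal is correct and follows essentially the same route as the paper: define $F(x,t)=f(x)-t$ (or $t-f(x)$, the sign is immaterial), use the computation \eqref{6.2} to convert the minimal surface equation on $A-N$ into the $1$-harmonic equation for $F$ on $(A-N)\times\mathbb{R}$, invoke Remark~\ref{R:6.2} to absorb the singular set and obtain least gradient on all of $A\times\mathbb{R}$, and then run the chain of Theorem~\ref{T:6.2} (on an open set, as permitted by the Remark following its proof) to conclude that the zero level set is absolutely area-minimizing over $\mathbb{R}$. Your version is in fact more explicit than the paper's two-sentence sketch, in that you spell out the verification of hypotheses $(i)$--$(iii)$ of Remark~\ref{R:6.2} and track the implications $\eqref{12}\Rightarrow\eqref{13}\Rightarrow\eqref{8}\Rightarrow\eqref{6}$ rather than citing Theorem~\ref{T:6.2} wholesale.
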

\begin{proof}
Applying \eqref{6.2} in which ``$f(x_1, \dots, x_{n-1}, t)=\eta(x_1,
\dots, x_{n-1})-t$'' is replaced with \linebreak ``$F(x_1, \dots, x_n,
t)=f(x_1, \dots, x_n)-t$'', and Remark \ref{R:6.2}, we have that
$F$ is a $C^1$ $1$-harmonic function in $A$. By Theorem
\ref{T:6.2}, the zero level set $S_0=\{(x_1, \dots, x_n, t) : t =
f(x_1, \dots, x_n)\}$ is absolutely area-minimizing in $A \times
\mathbb{R} \subset \mathbb{R}^{n+1}$ over $\mathbb{R}$.\end{proof}

\section{Further applications}
\label{sec.4}  A natural question arises: Are
Bombieri--De~Giorgi--Giusti and Lawson cones the only  $SO(m)$ $\times\,
SO(n)$-invariant singular absolutely area-minimizing integral
currents in Euclidean \linebreak spa\-ce $\mathbb{R}^{m+n+2}$?  The answer is
af\/f\/irmative. Combining the theory of $1$-harmonic functions
developed, and the techniques of transformation groups in \cite{HL,
L2,B}, and \cite{W}, evolved from the ideas in \cite{H}, one obtains the
following:
\begin{theorem}\label{T:6.3}  The cone $C(S^m \times S^n)$
over $S^m \times S^n$ is the unique singular absolutely
area-minimizing hypersurface in the class of $SO(m+1) \times SO
(n+1)$-invariant integral currents in $\mathbb{R}^{m+n+2}$ over
$\mathbb{R}$ for $m+n > 7$ or $m+n=6$, $|m-n| \leq 2$.  (It is known
that the cone is not even stable otherwise.)
\end{theorem}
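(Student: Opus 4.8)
The plan is to pass to the orbit space, convert the $G$-invariant minimizing problem into a one--dimensional weighted geodesic problem, build from its solutions a $1$-harmonic function whose zero level hypersurface is the cone, and then read both halves of the assertion off Theorem~\ref{T:6.2}. Concretely, I would write $\mathbb{R}^{m+n+2}=\mathbb{R}^{m+1}\times\mathbb{R}^{n+1}$ with $G=SO(m+1)\times SO(n+1)$ acting orthogonally on the two factors, so that the orbit space is the closed quadrant $Q=\{(u,v):u\ge 0,\ v\ge 0\}$, $u=|x|$, $v=|y|$, with orbit $S^m(u)\times S^n(v)$ of volume $\omega_m\omega_n\,u^mv^n$ over an interior point $(u,v)$. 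By the Hsiang--Lawson reduction and its refinements (\cite{H,HL,L2,B,W}) a $G$-invariant integral current corresponds, via the orbit map, to a rectifiable curve $\gamma\subset Q$ of mass $\omega_m\omega_n\int_\gamma u^mv^n\,ds$, and $\gamma$ is area-stationary, resp.\ area-minimizing, in the invariant class precisely when it is stationary, resp.\ minimizing, for the weighted length $\int_\gamma u^mv^n\,ds$ --- equivalently a geodesic, resp.\ minimizing geodesic, for the conformal metric $(u^mv^n)^2(du^2+dv^2)$ on the interior of $Q$. The cone $C(S^m\times S^n)$ has profile the ray $L_0=\{(u,v)\in Q:m\,v^2=n\,u^2\}$, and a first-variation computation shows $L_0$ is the only ray from the vertex that is weighted-length stationary.

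Next I would analyze the second--order ODE satisfied by weighted-length-stationary profiles. Issuing orthogonally from a point of the $u$-axis, resp.\ of the $v$-axis, produces a one--parameter family of complete embedded minimal $G$-hypersurfaces; the dimension hypothesis ``$m+n>7$, or $m+n=6$ with $|m-n|\le 2$'' is exactly the range in which every such profile is a monotone graph asymptotic to $L_0$ from one side and never meeting it, while in the excluded low-dimensional cases the profiles spiral about $L_0$, which is precisely why the cone then fails to be minimizing (compare \cite{S} and the counterexample $C(S^1\times S^5)$). Together with $L_0$ these profiles foliate the interior of $Q$; pulling the foliation back to $\mathbb{R}^{m+n+2}\setminus\{0\}$ and taking a suitably normalized leaf parameter produces a $G$-invariant function $F$, of class $C^1$ and $C^\infty$ off the cone, with $\nabla F\neq 0$, whose level hypersurfaces are all minimal --- hence a $C^1$ weak solution of \eqref{6.0} on $\mathbb{R}^{m+n+2}\setminus\{0\}$ with $\{F=0\}=C(S^m\times S^n)\setminus\{0\}$. \textbf{The main obstacle will be this global ODE analysis:} establishing the one-sided, non-oscillatory, $L_0$-asymptotic behaviour of the profiles throughout the stated dimension range, and checking that the leaf parameter $F$ is genuinely $C^1$ with nonvanishing gradient, in particular across the $u$- and $v$-axes where the orbits degenerate.

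With $F$ in hand, I would apply Theorem~\ref{T:6.2} on the open set $A=\mathbb{R}^{m+n+2}\setminus\{0\}$ (using the Remark that the theorem persists on open subsets): since $F$ is $1$-harmonic on $A$, every level hypersurface --- in particular $C(S^m\times S^n)\setminus\{0\}$ --- is absolutely area-minimizing in $A$ over $\mathbb{R}$ (statement~\eqref{6}), hence also over $\mathbb{Z}$ (statement~\eqref{8}), and, $F$ being $C^2$ off the vertex, $\ast dF/|dF|$ is a genuine calibration for it (statement~\eqref{14}). Since the vertex is a single point of codimension at least two in the cone, a standard removable-singularity/constancy argument promotes this to the full statement: $C(S^m\times S^n)$ is absolutely area-minimizing in all of $\mathbb{R}^{m+n+2}$ over $\mathbb{R}$, and over $\mathbb{Z}$.

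For uniqueness, let $T$ be any $G$-invariant absolutely area-minimizing integral current in $\mathbb{R}^{m+n+2}$ over $\mathbb{R}$ (hence over $\mathbb{Z}$ by Theorem~\ref{T:6.2}) that is singular. By Federer's regularity theorem \cite{F2,F3} the set $\mathrm{sing}\,T$ is $G$-invariant of Hausdorff dimension $\le m+n-6$; combined with the profile picture --- a minimizing profile meets each coordinate axis orthogonally, so $T$ is smooth over every orbit with $u+v>0$ --- this forces $\mathrm{sing}\,T=\{0\}$, so $T$ corresponds to a minimizing profile $\gamma$ through the vertex of $Q$. The tangent cone of $T$ at $0$ is then a $G$-invariant area-minimizing cone, hence $G\cdot L$ for some ray $L$, and stationarity forces $L=L_0$, so $\gamma$ is tangent to $L_0$ at the vertex. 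If $\gamma\neq L_0$ it would leave the vertex into one of the two regions cut out by $L_0$ and, being a minimal curve inside a region foliated by the minimal leaves of the previous step, would have to coincide with a single leaf by the maximum principle; but no leaf other than $L_0$ reaches the vertex (equivalently, the asymptotic analysis of the singular profile ODE at the vertex shows $L_0$ is its only solution emanating from $0$). Hence $\gamma=L_0$ and $T=C(S^m\times S^n)$. In particular, for $m+n=6$ with $|m-n|=2$ this contains the $SO(3)\times SO(5)$ assertion of Theorem~\ref{T:6.4}, and for $m=n=3$ it recovers the area-minimizing cone of Bombieri--De Giorgi--Giusti \cite{BDG}.
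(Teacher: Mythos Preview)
Your reduction to the orbit space and the construction of the $1$-harmonic function via the homothetic family of profiles asymptotic to the critical ray $L_0$ is exactly what the paper does (citing \cite{BDG} for the construction rather than spelling it out), and your application of Theorem~\ref{T:6.2} to conclude that $C(S^m\times S^n)$ is absolutely area-minimizing over $\mathbb{R}$ matches the paper's logic.

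Where you diverge is in the uniqueness argument. The paper does \emph{not} invoke Federer's regularity theorem, tangent cones, or a maximum principle. Instead it uses a short ``broken geodesic'' argument: since the weighted metric $u^{2m}v^{2n}(du^2+dv^2)$ degenerates on the coordinate axes, the vertex $O$ is at zero weighted distance from the axis endpoint $Q$ of any leaf $\Gamma_\lambda$; hence if some other length-minimizing curve $OP$ from the vertex met $\Gamma_\lambda$ transversally at $P$, one would have $l(OP)=l(QP)$, and extending along $\Gamma_\lambda$ to a point $R$ would make $OPR$ a minimizing curve from $O$ to $R$ with a genuine corner at $P$ --- impossible for a geodesic. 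This is both shorter and avoids the regularity machinery.

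Your route is heavier, and one step is not quite right as written. You argue that a minimal profile $\gamma$ lying in a region foliated by minimal leaves must \emph{coincide} with a leaf ``by the maximum principle''. But for curves in a surface the relevant statement is just ODE uniqueness at a point of tangency; it does not forbid $\gamma$ from crossing the leaves transversally, which is exactly the situation at hand. What actually rules out such a $\gamma$ is either (i) the singular ODE analysis at the vertex that you mention parenthetically (this is the real content, and is nontrivial --- it is the BDG/Simoes analysis in the stated dimension range), or (ii) the paper's corner argument, which uses that $\gamma$ is length-\emph{minimizing}, not merely stationary. Either fix works, but as stated your maximum-principle sentence is a gap; the cleanest repair is to replace it with the broken-geodesic contradiction above.
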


\begin{proof}
Assume $m=n$.  Let Lie group $G=SO(n+1)\times SO(n+1)$ acting on manifold
$\mathbb{R}^{n+1}\times \mathbb{R}^{n+1}$ in the standard
way, i.e.\ assigning $\big( (A,B),(x,y) \big)\in G \times
\mathbb{R}^{2n+2}$ to $ (A\cdot x,B\cdot y) \in
\mathbb{R}^{2n+2}  $, where ``$\cdot $'' is the matrix
multiplication. Then the collection $X$ of principle orbits is
given by $X =\{(x,y)\in \mathbb{R}^{2n+2}: |x||y| \ne 0\}  $,
where ``$| \cdot |$'' is the length of ``$\cdot$'' in
$\mathbb{R}^{n+1}  $. The orbit space which is stratif\/ied, can be
represented as $\mathbb{R}^{2n+2}/G = \{(u,v)\in \mathbb{R}^{2}:
u, v \ge 0\}\, = X \cup \{(u,v)\in \mathbb{R}^{2}: u=0, v > 0\}
\cup \{(u,v)\in \mathbb{R}^{2}: u > 0, v = 0\}\,\cup \{(0,0)\}
$. The canonical metric on $\mathbb{R}^{2n+2}/G\, $(compatible
with the f\/ibration over each stratum) is the usual f\/lat one
$ds_0^2 = du^2 + dv^2  $. The canonical projection $\pi :
\mathbb{R}^{2n+2} \to \mathbb{R}^{2n+2}/G$ is given by $\pi (x,y)
= (|x|,|y|)  $, and let $X/G = \pi (X)  $. Then the length of a
curve $\sigma$ in
 $(X/G  , ds_0^2)$ is the length of any orthogonal trajectory through the corresponding orbits in
 $X$,
 and
$2n$-dimensional volume of $\pi^{-1}((u,v))$ (which is
dif\/feomorphic to $S^n \times S^n$) is proportional to $u^n
v^n $, for $(u,v)\in X/G $. Thus if we choose the metric $ds^2
= u^{2n} v^{2n} (du^2 + dv^2)$ on $\mathbb{R}^{2n+2}/G\, ,$ then
by Fubini's theorem, the length of a curve $\sigma$ in
 $(\mathbb{R}^{2n+2}/G  , ds^2)$ is equal to $(2n+1)$-dimensional volume
 of hypersurface $\pi^{-1}\sigma$ (with possible singularities) in
$\mathbb{R}^{2n+2} $, up to a constant factor. It follows that $\sigma$ is a length minimizing geodesic ``downstairs" (in $(\mathbb{R}^{2n+2}/G ,
 ds^2) $), if and only if $\pi^{-1}\sigma$ is area-minimizing \emph{in the class
 of $G$-invariant} $(2n+1)$-dimensional currents ``upstairs''  (in $(\mathbb{R}^{2n+2}, dx_1^2 + \cdots + dx_{2n+2}^2)$), or equivalently, $\pi^{-1}\sigma$ is area-minimizing in
 $(\mathbb{R}^{2n+2} ,dx_1^2 + \cdots + dx_{2n+2}^2)
 $ \emph{in general} (cf.~\cite{L2}, \cite[p.~174, 6.4]{B} and \cite{W}). Furthermore, if a length minimizing geodesic $\sigma$ meets the boundary $\{(u,v)\in \mathbb{R}^{2}:
u=0, v > 0\}  \cup \{(u,v)\in \mathbb{R}^{2}: u > 0, v = 0\}  $,
it meets the boundary orthogonally by the f\/irst variational
formula for the arc-length functional, and the corresponding
$\pi^{-1}\sigma$ is a~regular, embedded and analytic hypersurface
in $\mathbb{R}^{2n+2} $. If $\sigma$ meets the vertex $\{(0,0)\} $, then
$\pi^{-1}\sigma$ is singular. Therefore, it suf\/f\/ices to show that
any curve in $\mathbb{R}^{2n+2}/G $, other than the diagonal ray
emanating from the origin is not absolutely length minimizing with
respect to the metric $ds^2=u^{2n}v^{2n}(du^2+dv^2)$.

\begin{figure}[h]
\centering
\includegraphics{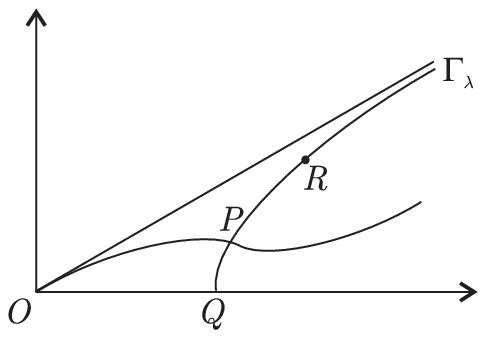}
\end{figure}

Now let $\Gamma = \{( u_0(t), v_0(t))\}$ be the geodesic through
$(1,0)$ in $(\mathbb{R}^{2n+2}/G , ds^2) $, and
$\Gamma_{\lambda}=\{(\lambda u_0(t), \lambda v_0(t))\}$, $\lambda
> 0  $. In \cite{BDG}, a $1$-harmonic function was constructed in such a way
that the lift of family $\{\Gamma_{\lambda}\}$ of these homothetic
geodesics are level hypersurfaces  in $(\mathbb{R}^{2n+2}
,dx_1^2 + \cdots + dx_{2n+2}^2)  $. Hence $\Gamma_{\lambda}$ is
absolutely length minimizing in $(\mathbb{R}^{2n+2}/G ,
 ds^2)$ (cf.~also Theorem~\ref{T:6.2}, Remark~\ref{R:6.2}).
Now suppose Theorem~\ref{T:6.3} were not true.  Then there
would exist a curve $QP \subset \Gamma_{\lambda}$ transverse to a
length minimizing curve $OP$. It follows that the length $l(OP)$ of
$OP$ would satisfy $l(OP)=l(QP)$. Consider the curve $OPR$ where $R$
is on the curve $\Gamma_{\lambda}$, and $l(OPR)=l(QPR) $. Then the
curve $OPR$ would be a geodesic, and hence smooth at $P $. This is
a contradiction. Similarly, one can show the remaining case $m \neq
n$.
\end{proof}

\begin{theorem}
\label{T:6.5} The cone $C(S^1 \times S^5)$ over $S^1
\times S^5$ is not absolutely area-minimizing, although it is
stable.
\end{theorem}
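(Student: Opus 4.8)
The statement has two halves — stability and non-minimality — which I would treat separately.

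\emph{Stability.} The cone $C:=C(S^{1}\times S^{5})\subset\mathbb{R}^{8}$ is a $7$-dimensional minimal hypersurface singular only at the origin, with link the generalized Clifford torus $L=S^{1}\big(1/\sqrt6\big)\times S^{5}\big(\sqrt{5/6}\big)$, which is minimal in $S^{7}$ with constant $|A_{L}|^{2}=1+5=6$; hence $|A_{C}|^{2}=6/r^{2}$ along $C$, with $r=|x|$. Since $\{0\}$ has codimension $7\ge 2$ in $C$ it has zero capacity there, so the stability inequality $\int_{C}|\nabla\varphi|^{2}\ge\int_{C}|A_{C}|^{2}\varphi^{2}$ need only be checked for $\varphi\in C^{\infty}_{c}(C\setminus\{0\})$; separating variables $\varphi=f(r)Y(\omega)$ with $-\Delta_{L}Y=\mu Y$, $\mu\ge 0$, it reduces to $\int_{0}^{\infty}(f')^{2}r^{6}\,dr\ge(6-\mu)\int_{0}^{\infty}f^{2}r^{4}\,dr$, which follows from the Hardy inequality $\int_{0}^{\infty}(f')^{2}r^{6}\,dr\ge\frac{25}{4}\int_{0}^{\infty}f^{2}r^{4}\,dr$ and $6-\mu\le 6<\frac{25}{4}$. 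So $C$ is (strictly) stable; one may alternatively quote \cite{Li}.

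\emph{Non-minimality.} I would reuse the transformation-group reduction from the proof of Theorem~\ref{T:6.3}, now for $G=SO(2)\times SO(6)$ acting on $\mathbb{R}^{2}\times\mathbb{R}^{6}=\mathbb{R}^{8}$: the orbit space is the quadrant $\{(u,v):u,v\ge 0\}$ with $u=|x|$, $v=|y|$, the orbit $S^{1}(u)\times S^{5}(v)$ has $6$-volume $2\pi^{4}uv^{5}$, and with the metric $ds^{2}=u^{2}v^{10}(du^{2}+dv^{2})$ the $7$-volume of a $G$-invariant hypersurface equals $2\pi^{4}$ times the $ds^{2}$-length of the corresponding curve. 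Thus a $G$-invariant (equivalently, arbitrary) absolutely area-minimizing $7$-current in $\mathbb{R}^{8}$ corresponds to an absolutely length-minimizing curve in $(\mathbb{R}^{8}/G,ds^{2})$, and $C$ corresponds to the unique interior geodesic ray $\gamma_{0}=\{v=\sqrt5\,u\}$, the critical direction $\tan^{2}\theta=5$ of the density $uv^{5}$. That critical point is a \emph{maximum} of the density along the unit arc, with $(\log(uv^{5}))''=-12$ there — so $\gamma_{0}$ is a (sharp) ``ridge'', hence only \emph{locally} length-minimizing (its second variation among $G$-invariant competitors is the Jacobi form of $C$ restricted to radial functions, positive by the strict stability above). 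It therefore suffices to show $\gamma_{0}$ is not \emph{absolutely} length-minimizing.

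For that I would compare the truncated cone $C\cap B_{1}$ — represented by the segment $OP_{1}$ of $\gamma_{0}$, where $P_{1}=\big(1/\sqrt6,\sqrt{5/6}\big)$ and $O$ is the vertex — with the $G$-invariant ``edge-cap'' over the orbit $C\cap\partial B_{1}$ obtained from a curve running from $P_{1}$ to the density-zero edge $\{u=0\}$; this cap has the same boundary as $C\cap B_{1}$ and is homologous to it in $\bar B_{1}$, and its least $7$-volume is $2\pi^{4}\,\mathrm{dist}_{ds^{2}}(P_{1},\{u=0\})$. Hence $C\cap B_{1}$, and with it $C$, fails to be absolutely area-minimizing as soon as
\[
\mathrm{dist}_{ds^{2}}\big(P_{1},\{u=0\}\big)\ <\ \mathrm{length}_{ds^{2}}(OP_{1})\ =\ (2\pi^{4})^{-1}\,\mathrm{Vol}(C\cap B_{1}).
\]
This strict inequality is where $(1,5)$ parts company with $(2,4)$ and $(3,3)$, for which Theorem~\ref{T:6.3} already gives minimizing (there the homothetic minimal hypersurfaces foliate each side of $\gamma_{0}$ via a $1$-harmonic function as in \cite{BDG}, and Theorem~\ref{T:6.2} applies): for $(1,5)$ the ridge is sharp and lies close to $\{u=0\}$, and the inequality holds.

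Proving that inequality is, I expect, the main obstacle. I would do it by analysing the profile ODE of $G$-invariant minimal hypersurfaces $\{|y|=v(|x|)\}$, which in the autonomous variables $\phi=v/u$, $\xi=\log u$ reads
\[
\frac{\phi_{\xi\xi}+\phi_{\xi}}{1+(\phi+\phi_{\xi})^{2}}\ =\ \frac{5}{\phi}-(\phi+\phi_{\xi}),
\]
with unique interior equilibrium $\phi\equiv\sqrt5$ and linearization $\psi_{\xi\xi}+7\psi_{\xi}+12\psi=0$ there (a stable node, roots $-3,-4$), so solutions approach $\gamma_{0}$ monotonically near infinity. The content is the \emph{global} statement that the trajectory representing the smooth $G$-invariant minimal cap off the edge $\{u=0\}$ \emph{overshoots} $\phi=\sqrt5$ as $\xi$ increases — equivalently, that cap crosses $C$ — which makes the edge-cap through $P_{1}$ strictly cheaper than $C\cap B_{1}$. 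This overshoot is precisely the effect Simoes \cite{S} detected by numerical integration; making it rigorous — e.g.\ by exhibiting an explicit sub- or supersolution of the displayed ODE that forces $\phi-\sqrt5$ to change sign along that trajectory — is the crux. Granting it, $C$ is not absolutely area-minimizing over $\mathbb{R}$, a fortiori not over $\mathbb{Z}$, while being stable by the first part.
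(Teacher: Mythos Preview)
Your stability argument via Hardy's inequality is correct and is essentially Simons' criterion unpacked; the paper simply cites \cite{Si}. For non-minimality you and the paper share the $G=SO(2)\times SO(6)$ reduction to the quadrant (your weight $ds^{2}=u^{2}v^{10}(du^{2}+dv^{2})$ is the right one), but from there the arguments diverge.

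The paper does not build an explicit competitor. It argues by contradiction, using as input the result of \cite{S,L2,W} that the homothetic family $\Gamma_{\lambda}$ of smooth $G$-invariant minimal hypersurfaces emanating from the edge of the quadrant are \emph{absolutely length-minimizing} in $(\mathbb{R}^{8}/G,ds^{2})$ and cross the cone ray $v=\sqrt5\,u$. If the cone were also minimizing, take a crossing point $P$; both the cone segment $OP$ and the segment $QP\subset\Gamma_{\lambda}$ then realize the distance from $P$ to the degenerate boundary, so $l(OP)=l(QP)$. Extending past $P$ along $\Gamma_{\lambda}$ to some $R$, the broken path $OPR$ has the same length as the geodesic arc $QPR$, hence is itself length-minimizing from $R$ to the boundary and therefore a smooth geodesic --- impossible, since it has a corner at $P$.

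Your direct area-comparison route would also work in principle, but the step you yourself flag as ``the crux'' and leave open --- the overshoot of the cap trajectory past $\phi=\sqrt5$ --- is precisely the geometric content supplied by \cite{S,L2,W}. As written, then, your proposal is not a complete proof; it becomes one once you either invoke those references or actually carry out the sub/supersolution analysis you sketch. Even granting that, the paper's corner argument is cleaner: it needs only that some $\Gamma_{\lambda}$ is minimizing and transverse to the cone ray, not the quantitative inequality $\mathrm{dist}_{ds^{2}}(P_{1},\{u=0\})<l(OP_{1})$ you aim for.
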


\begin{proof}  Suppose, on the contrary, that the cone were absolutely area-minimizing.
Then consider Lie group $G=SO(2)\times SO(6)$ acting on
manifold $\mathbb{R}^{2}\times \mathbb{R}^{6}$ in the standard
way. By the previous argument, this would imply the line segment
$\overline {OP}$ were length-minimizing in $(\mathbb{R}^{8}/G,
ds^2)$, where $ds^2 = u^2v^6 (du^2 + dv^2) $. On the other
hand, based on the study of Simoes' thesis \cite{S}, \cite{L2} and~\cite{W}, the
level curve $(u_{\lambda}, v_{\lambda})$ in the $u, v$-plane is
absolutely length-minimizing. Argue as before, the curve $OPR$
would be smooth at~$P$. This is a contradiction. The stability of
the cone follows from Simons' work~\cite{Si}.
\end{proof}
\begin{figure}[h]
\centering
\includegraphics{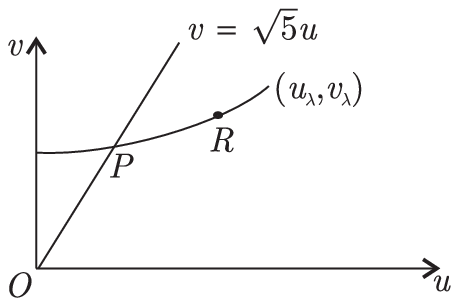}
\end{figure}
\begin{theorem}
\label{T:6.4}  Any $7$-dimensional $SO(2) \times
SO(6)$-invariant absolutely area- minimizing integral current in
$\mathbb{R}^8$ is real analytic.
\end{theorem}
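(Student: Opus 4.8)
The plan is to reduce the assertion to the transformation-group picture already set up for Theorems~\ref{T:6.3} and~\ref{T:6.5}, and then to use Theorem~\ref{T:6.5} to exclude the only mechanism that could create a singularity. Let $T$ be an $SO(2)\times SO(6)$-invariant absolutely area-minimizing integral current in $\mathbb{R}^8=\mathbb{R}^2\times\mathbb{R}^6$, and let the Lie group $G=SO(2)\times SO(6)$ act in the standard way, with orbit space $\mathbb{R}^8/G=\{(u,v)\in\mathbb{R}^2:u,v\ge 0\}$ carrying the weighted flat metric $ds^2$ of the proof of Theorem~\ref{T:6.5}. Exactly as in Theorem~\ref{T:6.3}, the canonical projection $\pi:\mathbb{R}^8\to\mathbb{R}^8/G$ gives a correspondence under which the $7$-dimensional area of $\pi^{-1}\sigma$ equals, up to a constant factor, the $ds^2$-length of a curve $\sigma$, and $\pi^{-1}\sigma$ is area-minimizing in $\mathbb{R}^8$ (equivalently, among $G$-invariant $7$-currents) precisely when $\sigma$ is absolutely length-minimizing downstairs. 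Projecting $T$, I obtain $T=\pi^{-1}\sigma$ with $\sigma$ an absolutely length-minimizing curve in $(\mathbb{R}^8/G,ds^2)$.

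I then split into two cases according to whether $\sigma$ meets the vertex $O=(0,0)$. If $\sigma$ avoids $O$, then $\sigma$ is an analytic geodesic that either stays in the interior $X/G$ or meets one of the boundary half-lines $\{u=0,\,v>0\}$, $\{u>0,\,v=0\}$; by the first-variation formula a length-minimizing curve meets such a half-line orthogonally, and in either situation --- precisely as recorded in the proof of Theorem~\ref{T:6.3} --- $\pi^{-1}\sigma$ is a regular, embedded, real analytic hypersurface of $\mathbb{R}^8$, so $T$ is real analytic. It remains to rule out $\sigma\ni O$. If $\sigma$ passed through $O$, the origin would be a singular point of $T$; by Federer's dimension reduction (Section~2) the singular set of a $7$-dimensional minimizing current in $\mathbb{R}^8$ is a set of isolated points, so the tangent cone of $T$ at $0$ is an $SO(2)\times SO(6)$-invariant absolutely area-minimizing $7$-cone with isolated singularity, hence the cone over a smooth $SO(2)\times SO(6)$-invariant minimal hypersurface of $S^7$; the only such hypersurface is $S^1(1/\sqrt{6})\times S^5(\sqrt{5/6})$, so the tangent cone is $C(S^1\times S^5)$, contradicting Theorem~\ref{T:6.5}. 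Equivalently, arguing downstairs in the spirit of Theorems~\ref{T:6.3}--\ref{T:6.5}: choose a point $P$ on one of the absolutely length-minimizing homothetic level curves $\Gamma_\lambda$ of Simões / Bombieri--De~Giorgi--Giusti type, which avoid $O$; since $l(OP)=l(QP)$ with $Q=\Gamma_\lambda\cap OP$, extending to a further point $R\in\Gamma_\lambda$ gives $l(OPR)=l(QPR)$, so $OPR$ is length-minimizing, hence a smooth geodesic with no corner at $P$, forcing $\Gamma_\lambda\ni O$ --- a contradiction. Either way $\sigma$ avoids $O$, and the previous case applies.

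The main obstacle, as in Theorem~\ref{T:6.5}, is the exclusion of $\sigma\ni O$: one must know that a minimizing current through the $G$-fixed point $0$ has tangent cone exactly $C(S^1\times S^5)$, which uses both the isolatedness of the singular set for $7$-dimensional minimizing currents in $\mathbb{R}^8$ and the uniqueness of the $SO(2)\times SO(6)$-invariant minimal hypercone; alternatively, on the elementary route one must construct the comparison curves $\Gamma_\lambda$ and run the corner argument carefully near the degeneration locus $\{uv=0\}$ of $ds^2$, exactly as in the proof of Theorem~\ref{T:6.5}. Everything else is a direct transcription of the transformation-group reduction already carried out for Theorem~\ref{T:6.3}, now in the case $(m,n)=(1,5)$, where the hypothesis $|m-n|\le 2$ of that theorem fails precisely because $C(S^1\times S^5)$ is not area-minimizing.
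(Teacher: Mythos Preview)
Your proof is correct and follows essentially the same route as the paper: reduce via the $SO(2)\times SO(6)$ transformation-group picture to curves in the weighted orbit space $(\mathbb{R}^8/G,ds^2)$, note that curves avoiding the vertex $O$ lift to regular analytic hypersurfaces, and exclude curves through $O$ by combining Theorem~\ref{T:6.5} with the geodesic-corner contradiction of Theorem~\ref{T:6.3}. The paper argues entirely downstairs: it rules out the ray $v=\sqrt{5}\,u$ by Theorem~\ref{T:6.5} and any other curve emanating from $O$ by the same ``irregularity of a geodesic'' argument. Your one genuine addition is the tangent-cone alternative, which the paper does not use: identifying any tangent cone at the putative singularity as an $SO(2)\times SO(6)$-invariant minimizing $7$-cone with isolated singularity, observing that its link must then be a single principal orbit (hence the minimal product $S^1(1/\sqrt6)\times S^5(\sqrt{5/6})$), and invoking Theorem~\ref{T:6.5}. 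This route is a bit more conceptual---it explains why $C(S^1\times S^5)$ is exactly the obstruction---while the paper's route stays within the elementary comparison geometry of the orbit space; both reach the same conclusion.
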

\begin{proof}
By the argument given in the proof of Theorem~\ref{T:6.3},
it suf\/f\/ices to show that any curve in $\mathbb{R}^{2n+2}/G$,
from the origin is not absolutely
length minimizing with respect to the metric
$ds^2=u^{2}v^{6}(du^2+dv^2)$. By Theorem \ref{T:6.5}, the diagonal
ray emanating from the origin is not length minimizing. Similarly,
if there were an absolutely length minimizing curve starting from
the origin lying above $v=\sqrt 5 u$, then this would lead to
an irregularity of a geodesic, a contradic\-tion.
\end{proof}

\section{Comparison theorem}
\label{sec.5} It is known that each level hypersurface of a function
of least gradient def\/ined on an open subset $A \subset \mathbb{R}^n$
is absolutely area-minimizing in $A$ over $\mathbb{Z}$.  It is
tempting to ask it if is absolutely area-minimizing in $A$ over
$\mathbb{R}$. This motivates our discussion on comparison between
real and integral absolute (or homological) minima.  In general they
are distinct. Examples are given by Almgren \cite[5.11]{F4}, Federer \cite{F4}
and Lawson \cite{L1}. Furthermore, in the case of 1-dimensional (or
co-dimension 1) integral f\/lat chains, Federer \cite{F4} has shown that
real and integral homological (or absolute) minimizing are the same.

Let $\overline {M}$ be a locally Lipschitz neighborhood retract in
$\mathbb{R}^n$ (i.e.\ there exists a locally Lipschitz map
which retracts a neighborhood of $\overline {M}$ onto $\overline
{M})$, $M$ be an open subset of $\overline {M}$, and $A$ be
an open subset of $\mathbb{R}^n$. Using the assumption on
vanishing topology, an exhaustion of $M$ by an increasing sequence
of compact set $K_i \subset M$, we obtain the following:
\begin{theorem}\label{T:6.6}  $(1)$ Let $T^{n-1}$ denote a codimension $1$
integral absolutely area-minimizing rectifiable current in $M$
with homology group $H_{n-1}(\overline {M})=0$. Then $T^{n-1}$
is absolutely area-minimizing in~$M$ if and only if $ T^{n-1}$
is absolutely area-minimizing in $A$; and if and only if $T^{n-1}$
is real absolutely area-minimizing in $A$. $(2)$ Let
$H_1(\overline {M})=0$. $T^1$ is a homologically area-minimizing
rectifiable current of degree $1$ of~$M$ if and only if $T^1$ is
real homologically area-minimizing in~$M$.
\end{theorem}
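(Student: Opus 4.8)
The plan is to establish part~(1) as a short cycle of implications among its three minimality assertions and part~(2) directly, the common engine being Federer's theorem \cite{F4} that in codimension one --- and in dimension one --- homological, absolute, and integral versus real minimality all coincide once the relevant homology of the ambient set vanishes. I would first dispose of the cheap implications in~(1). Since $\overline{M}$ is a locally Lipschitz retract of a neighbourhood $W\subset\mathbb{R}^n$ (which, in the case at hand, one takes to contain $A$) and $H_{n-1}(\overline{M})=0$, the $(n-1)$st homology also vanishes for $M$, for $A$, and for $W$; in each of these every compactly supported codimension-one cycle bounds a compactly supported chain, so being absolutely area-minimizing there coincides with being homologically area-minimizing there, and \cite{F4} then equates both with the corresponding notion over $\mathbb{R}$. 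Hence minimality of $T^{n-1}$ in $A$ over $\mathbb{Z}$ is equivalent to real minimality of $T^{n-1}$ in $A$ at no cost, and minimality in $A$ implies minimality in $M$ because every admissible comparison surface in $M$ is already one in $A$. Everything thus reduces to the single implication: if $T^{n-1}$ is absolutely area-minimizing in $M$ then it is absolutely area-minimizing in $A$.

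I would prove this by calibration. A codimension-one absolutely area-minimizing rectifiable current in $M$ with $H_{n-1}(\overline{M})=0$ is calibrated by a weakly closed form $\omega$ of comass one --- the $\frac{\ast df}{|df|}$ of statement \eqref{5}, built from a least-gradient function adapted to $T^{n-1}$ as in the equivalences \eqref{12}--\eqref{13} of Theorem~\ref{T:6.2}, the vanishing of $H_{n-1}(\overline{M})$ being what makes $\omega$ global. Writing $\rho\colon W\to\overline{M}$ for the retraction, $\rho^{\ast}\omega$ is a closed form on $W$ and hence, since $A\subset W$, on $A$; for any comparison current $S$ for $T^{n-1}$ in $A$ one then has, using that $\rho_{\#}S$ is a cycle in $\overline{M}$ homologous to $T^{n-1}$ there and that $\omega$ is closed, $\mathbf{M}(S)\ge\int_S\rho^{\ast}\omega=\int_{\rho_{\#}S}\omega=\int_{T^{n-1}}\omega=\mathbf{M}(T^{n-1})$. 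The exhaustion $\{K_i\}$ of $M$ serves to run this comparison on each compactum and then let the supports exhaust $M$.

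The hard part is the first inequality, $\mathbf{M}(S)\ge\int_S\rho^{\ast}\omega$: since a general Lipschitz retraction multiplies comass by a power of its Lipschitz constant, $\rho^{\ast}\omega$ a priori has comass bounded only by that power, not by one. This is precisely where codimension one is indispensable. Near the retract $\overline{M}$ one may take $\rho$ to be a nearest-point type projection whose Lipschitz constant tends to $1$ as $W$ shrinks, and --- because a codimension-one area-minimizing current is, off a closed set of dimension at most $n-8$, a smooth minimal hypersurface --- projection along the normal directions of $\overline{M}$ does not stretch tangential area; confining the comparison to an arbitrarily thin tube about $\overline{M}$ at arbitrarily small cost, and passing to the limit along $\{K_i\}$, should close the estimate. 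I expect this quantitative step, and not any formal manipulation, to be the heart of the proof, and it is presumably where the codimension-one machinery of \cite{F4} is really used.

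Part~(2) follows the same template but is lighter, since nothing leaves $M$: with $H_1(\overline{M})=0$, a degree-one homologically area-minimizing $1$-current in $M$ is exactly one minimizing mass among all compactly supported $1$-currents with the prescribed boundary, and the dimension-one case of \cite{F4} identifies this with the corresponding notion over $\mathbb{R}$; here the retraction and the exhaustion $\{K_i\}$ are needed only to confirm that the vanishing of $H_1(\overline{M})$ really does force the comparison classes to be the expected ones. This yields the asserted equivalence for $T^1$, completing the proof.
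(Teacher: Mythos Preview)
The paper does not actually prove Theorem~\ref{T:6.6}: apart from the single prefatory sentence ``Using the assumption on vanishing topology, an exhaustion of $M$ by an increasing sequence of compact set $K_i \subset M$, we obtain the following'' and the acknowledgement that the result ``essentially derives from'' Federer, no argument is given. So there is little to compare against, but your proposal does have a genuine gap worth flagging.

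The problematic step is your calibration argument for the implication ``minimizing in $M$ $\Rightarrow$ minimizing in $A$''. You assert that an arbitrary codimension-one absolutely area-minimizing current $T^{n-1}$ in $M$ is calibrated by a form $\omega = \frac{\ast df}{|df|}$ for some least-gradient $f$, invoking Theorem~\ref{T:6.2}. But Theorem~\ref{T:6.2} runs in the opposite direction: it \emph{starts} from a $1$-harmonic (equivalently, least-gradient) function $f$ on $\mathbb{R}^n$ and deduces that each level set is minimizing and calibrated. It nowhere shows that an arbitrary minimizing hypersurface arises as such a level set, nor does it manufacture a calibrating form from $T^{n-1}$ alone. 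The vanishing of $H_{n-1}(\overline{M})$ does not supply this construction either; it removes a cohomological obstruction to closedness of a would-be calibration, but does not produce the form in the first place. Your subsequent worry about the comass of $\rho^{\ast}\omega$ is therefore moot --- the form $\omega$ has not been built --- and the proposed fix (shrinking tubes so that the retraction has Lipschitz constant near $1$) would in any case require smoothness of $\overline{M}$ that is not assumed.

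The route the paper points to, and that Federer's \cite{F4} actually supplies, bypasses calibration entirely. Section~5 of \cite{F4} proves directly that for integral flat chains of dimension $n-1$ (and, dually, of dimension $1$) in compact Lipschitz neighbourhood retracts, the integral and real minima of mass coincide; the exhaustion $\{K_i\}$ reduces the statement on the open set $M$ to these compact pairs, and the hypothesis $H_{n-1}(\overline{M})=0$ (resp.\ $H_1(\overline{M})=0$) is what collapses the distinction between absolute and homological competitors. Your treatment of part~(2) is essentially this and is fine; for part~(1) you should replace the calibration detour by the same direct appeal to \cite{F4}.
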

We have the following immediate
\begin{corollary}  The level hypersurface of a function of least
gradient in an open subset $A$ of $\mathbb{R}^n$ is absolutely
area-minimizing over $\mathbb{R}$. \end{corollary}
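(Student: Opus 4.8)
The plan is to obtain this as a $\mathbb{Z}\to\mathbb{R}$ coefficient upgrade of the classical fact recalled at the opening of this section — that each level hypersurface of a function of least gradient on an open set $A\subset\mathbb{R}^n$ is absolutely area-minimizing in $A$ over $\mathbb{Z}$ (Miranda~\cite{M1}; cf.\ also~\cite{BDG}) — by invoking the comparison Theorem~\ref{T:6.6}(1). One cannot merely quote Theorem~\ref{T:6.2} here, since a least-gradient function need be neither $C^1$ nor critical-point-free; the substance of the corollary is precisely this passage from integral to real minimality for genuinely nonsmooth level sets.

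In detail, I would fix $\lambda\in\mathbb{R}$ for which the sublevel set $E_\lambda=\{x:f(x)\ge\lambda\}$ has locally finite perimeter — this holds for a.e.\ $\lambda$ by the coarea formula for $BV$ functions — and take $T^{n-1}_\lambda$ to be the current $\partial E_\lambda$, i.e.\ the level hypersurface $S_\lambda$ suitably oriented; by the recalled theorem it is a codimension-$1$ integral absolutely area-minimizing rectifiable current in $A$ over $\mathbb{Z}$. I would then exhaust $A$ by compact Lipschitz neighborhood retracts $\overline{M}_i$ with $H_{n-1}(\overline{M}_i)=0$ — in the model case $A=\mathbb{R}^n$ one may take closed balls $\overline{M}_i=\overline{B(0,i)}$, which are contractible and are retracts of $\mathbb{R}^n$ via the $1$-Lipschitz nearest-point projection onto a convex body. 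Because absolute area-minimization over $\mathbb{Z}$ is tested only against competitors of compact support, the restriction of $T^{n-1}_\lambda$ to the interior $M_i$ is still integral absolutely area-minimizing there; applying Theorem~\ref{T:6.6}(1) with $M=M_i$, with $\overline{M}=\overline{M}_i$, and with the ambient open set also equal to $M_i$, the hypothesis $H_{n-1}(\overline{M}_i)=0$ upgrades this to the statement that $T^{n-1}_\lambda$ is real absolutely area-minimizing in $M_i$. Finally, every compact $K\subset A$ sits in some $M_i$ and the real absolute minimality inequality only involves competitors supported on such compacta, so letting $i\to\infty$ yields that $T^{n-1}_\lambda$ is real absolutely area-minimizing in $A$, i.e.\ over $\mathbb{R}$, which is the assertion.

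The step I expect to be the main obstacle is producing the exhaustion by compact Lipschitz neighborhood retracts with vanishing $(n-1)$-st homology demanded by Theorem~\ref{T:6.6}(1): this is exactly the topological input that its proof converts into the rounding of a real cycle to an integral one with the same boundary. In the model case $A=\mathbb{R}^n$ the convex exhaustion settles it outright; for a general $A$ one uses that level hypersurfaces of $BV$ functions are boundaries — hence null-homologous — together with the locality of both integral and real area-minimality, to reduce to cells of $A$ on which the requisite retracts exist. A minor point to check is that discarding the Lebesgue-null set of exceptional $\lambda$ costs nothing, which follows from the $L^1_{\rm loc}$-approximation and lower-semicontinuity stability of the least-gradient property already used in the proof of the implication ``least gradient $\Rightarrow$ oriented boundary of least area'' within Theorem~\ref{T:6.2}.
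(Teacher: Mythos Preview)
Your proposal is correct and follows the paper's route: the corollary is recorded there as immediate from Theorem~\ref{T:6.6}(1) applied to the $\mathbb{Z}$-minimality of level sets recalled at the opening of the section. Your exhaustion by compact retracts (and the a.e.-$\lambda$ caveat) is an unnecessary detour, however, since $\overline{M}$ in Theorem~\ref{T:6.6} need not be compact---taking $\overline{M}=\mathbb{R}^n$ and $M=A$ gives $H_{n-1}(\overline{M})=0$ for free and yields the conclusion in one stroke, which is precisely what makes the corollary ``immediate'' and dissolves the obstacle you flag.
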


\begin{corollary}  Let $N$ be a closed set in $A \subset \mathbb{R}^n$ with
$H_{n- 1}(N)=0$.  The graph of any weak solution of the minimal
surface equation $\sum\limits ^n_{i=1} \frac {\partial}{\partial
x_i} \left(\frac {\frac{\partial f}{\partial x_i}}{\sqrt {1+ |\nabla
f|^2}}\right)=0$ on $A-N$ is in fact absolutely area-minimizing in
$A \times \mathbb{R} \subset \mathbb{R}^{N+1}$ {\underbar {over
$\mathbb{R}$}}.
\end{corollary}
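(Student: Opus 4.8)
The plan is to realize the graph of $f$ as a level hypersurface of a function of least gradient on the product domain and then to quote the preceding corollary (equivalently, the comparison Theorem~\ref{T:6.6}). First I would pass from $f$ on $A-N$ to
\[
F(x_{1},\dots ,x_{n},t)=f(x_{1},\dots ,x_{n})-t \qquad \hbox{on}\ \ (A-N)\times\mathbb{R}\subset\mathbb{R}^{n+1}.
\]
Since $\partial F/\partial t\equiv -1$, the gradient $\nabla F$ never vanishes and $|\nabla F|=\sqrt{1+|\nabla f|^{2}}$ depends on $x$ only; carrying out the integration by parts behind \eqref{6.2}, now with $t$ the last of $n+1$ variables and using that $\int\partial_{t}\varphi\, dt=0$ for compactly supported $\varphi$, one obtains
\[
\int \frac{\nabla F}{|\nabla F|}\cdot\nabla\varphi
=\int \sum_{i=1}^{n}\frac{\partial f/\partial x_{i}}{\sqrt{1+|\nabla f|^{2}}}\,\frac{\partial\varphi}{\partial x_{i}}=0
\qquad \hbox{for all}\ \ \varphi\in C_{0}^{\infty}\big((A-N)\times\mathbb{R}\big),
\]
the last equality being exactly the weak minimal surface equation satisfied by $f$ on $A-N$. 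Thus $F\in H_{\rm loc}^{1,1}(A\times\mathbb{R})$ (using the implicit integrability of $f$ across $N$, as in Remark~\ref{R:6.2}) is a $C^{1}$ weak solution of \eqref{6.0} off the closed set $N\times\mathbb{R}$.

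Next I would apply Remark~\ref{R:6.2} on the open set $A\times\mathbb{R}$ with bad set $\widetilde{N}=N\times\mathbb{R}$: hypothesis~$(i)$ is vacuous since $\{|\nabla F|=0\}=\varnothing$; hypothesis~$(ii)$ holds because the smallness assumption on $N$ makes $N\times\mathbb{R}$ negligible in codimension one inside $A\times\mathbb{R}$; and hypothesis~$(iii)$ is the identity just displayed. Hence $F$ has least gradient in $A\times\mathbb{R}$, and $\nabla F\neq 0$ everywhere. By the chain \eqref{12}$\Rightarrow$\eqref{13}$\Rightarrow$\eqref{8} of Theorem~\ref{T:6.2} (the coarea formula for $BV$ functions together with Miranda's theorem), valid on open subsets of Euclidean space by the Remark following Theorem~\ref{T:6.2}, every level set of $F$ is absolutely area-minimizing over $\mathbb{Z}$; in particular the zero level set $S_{0}=\{(x,t):t=f(x)\}$, which is the graph of $f$, is absolutely area-minimizing in $A\times\mathbb{R}$ over $\mathbb{Z}$. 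Now $S_{0}$ is a codimension-one integral absolutely area-minimizing rectifiable current in $M=A\times\mathbb{R}$, and $H_{n}(\overline{M})=0$ for the Lipschitz neighborhood retract $\overline{M}=\mathbb{R}^{n+1}$, so Theorem~\ref{T:6.6}$(1)$ (equivalently, the preceding corollary of this section, applied to the least-gradient function $F$) shows that $S_{0}$ is real absolutely area-minimizing in $A\times\mathbb{R}$, i.e.\ absolutely area-minimizing over~$\mathbb{R}$.

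The only non-routine point is this last upgrade, from $\mathbb{Z}$- to $\mathbb{R}$-minimality. It rests entirely on the hypothesis on $N$ and on checking that the graph genuinely meets the hypotheses of Theorem~\ref{T:6.6} --- in particular that the singular set $N\times\mathbb{R}$ is invisible to the exhaustion and vanishing-topology argument underlying that theorem, so that the comparison between integral and real minima is legitimate. Verifying the weak equation on the product domain and the codimension-one negligibility of $N\times\mathbb{R}$ is routine bookkeeping.
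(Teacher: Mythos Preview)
Your proposal is correct and follows essentially the same route as the paper. The paper does not write out a separate proof for this corollary; it is meant to follow immediately from the preceding corollary of Section~\ref{sec.5} (level hypersurfaces of least-gradient functions are absolutely area-minimizing over $\mathbb{R}$) together with the construction $F(x,t)=f(x)-t$ and Remark~\ref{R:6.2} already used in the Section~\ref{sec.3} corollary---and that is exactly the argument you have unfolded, with the only extra step being that you invoke Theorem~\ref{T:6.6} directly rather than through its first corollary.
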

\begin{corollary}  All the examples we find in {\rm \cite{W}} are absolutely
area-minimizing over $\mathbb{R}$.
\end{corollary}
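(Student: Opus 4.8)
The plan is to observe that the corollary is essentially a restatement of what Theorem~\ref{T:6.6}(1) buys us, applied to the specific family of currents produced in \cite{W}. So the work is to match the hypotheses of Theorem~\ref{T:6.6} against the construction in \cite{W} rather than to do any new analysis.

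First I would recall how the examples in \cite{W} are built: each of them arises, exactly as in the proof of Theorem~\ref{T:6.3}, as the lift $\pi^{-1}\sigma$ of an absolutely length-minimizing geodesic $\sigma$ in a suitably weighted orbit space, and the equivariance principle for $G$-invariant problems (cf.~\cite{L2}, \cite{B}, \cite{W}) promotes this to the statement that $\pi^{-1}\sigma$ is a codimension-one integral rectifiable current which is absolutely area-minimizing over $\mathbb{Z}$ in the ambient Euclidean space. The presence of a singular set (when $\sigma$ reaches the vertex of the orbit space) is not an obstruction, since throughout we work in the class of rectifiable currents, for which Theorem~\ref{T:6.6} is stated.

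Next I would verify the one topological hypothesis of Theorem~\ref{T:6.6}(1). In each case the relevant $\overline{M}$ is a Euclidean space $\mathbb{R}^{N}$ (or an open subset of it that is a locally Lipschitz neighborhood retract), which is contractible; in particular $H_{N-1}(\overline{M})=0$, and the vanishing-topology/exhaustion hypotheses underlying the proof of Theorem~\ref{T:6.6} hold. Taking the codimension-one integral absolutely area-minimizing current $T^{N-1}$ from \cite{W} and applying Theorem~\ref{T:6.6}(1) with $A=M=\mathbb{R}^{N}$, we conclude that $T^{N-1}$ is real absolutely area-minimizing in $A$, i.e.\ absolutely area-minimizing over $\mathbb{R}$. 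This is exactly the assertion of the corollary.

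The main obstacle is bookkeeping rather than mathematics: one has to check, example by example in \cite{W}, that the current produced has codimension \emph{exactly} one (so that part~(1) of Theorem~\ref{T:6.6}, not merely part~(2), applies) and that the ambient manifold meets the locally Lipschitz neighborhood retract and vanishing-homology requirements; once these are confirmed, the corollary follows at once from Theorem~\ref{T:6.6} and needs no further argument.
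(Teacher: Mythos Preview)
Your proposal is correct and matches the paper's approach: the paper states this corollary without proof, as an immediate consequence of Theorem~\ref{T:6.6}, and your argument spells out exactly that deduction --- verify that the currents from \cite{W} are codimension-one integral absolutely area-minimizing over~$\mathbb{Z}$ in an ambient space with vanishing top homology, then invoke Theorem~\ref{T:6.6}(1). The only caveat, which you already flag, is that \cite{W} concerns symmetric spaces rather than just $\mathbb{R}^N$, so the check that $\overline{M}$ is a locally Lipschitz neighborhood retract with $H_{n-1}(\overline{M})=0$ must be done in that generality; once confirmed, nothing further is needed.
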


%\appendix

%\section{First appendix}
%We kindly ask the authors to give all technical details of paper in form of Appendixes.

\subsection*{Acknowledgements}
Research was partially supported by NSF Award No DMS-0508661, OU
Presidential International Travel Fellowship, and OU Faculty
Enrichment Grant.  The author wishes to thank Professors H.
Blaine Lawson Jr., and Wu-Yi Hsiang for their interest, and
Professor Herbert Federer for his help in Theorem \ref{T:6.6}
which essentially derives from him. The author also wishes to
express his gratitude to the referees and the editors for their
comments and suggestions without which the present form of this
article would not be possible.

\pdfbookmark[1]{References}{ref}

\LastPageEnding

\begin{thebibliography}{99}

\footnotesize\itemsep=0pt

\bibitem{AV} Andreotti A., Vesentini E.,   Carleman estimate for the
Laplace--Beltrami equation on complex manifolds, {\it Inst. Hautes
\'Etudes Sci. Publ. Math.} {\bf 25} (1965), 81--130.

\bibitem{B} Brothers J.E.,  Invariance of solutions to invariant parametric
variational problems, {\it Trans. Amer. Math. Soc.} {\bf 262} (1980), 159--179.

\bibitem{BDG} Bombieri E., de
Giorgi E.,   Giusti E., Minimal cones and the Bernstein
problem, {\it Invent. Math.} {\bf 7} (1969), 243--268.

\bibitem{F1} Federer H.,   Some theorems on
integral currents, {\it Trans. Amer. Math. Soc.} {\bf 117} (1965) 43--67.

\bibitem{F2} Federer H.,    Geometric measure theory, Springer,
Berlin~-- Heidelberg~-- New York, 1969.

\bibitem{F3} Federer H.,   The singular sets of area
minimizing rectif\/iable currents with codimension one and of area
minimizing f\/lat chains modulo two with arbitrary codimension,
{\it Bull. Amer. Math. Soc.} {\bf 76} (1970), 767--771.

\bibitem{F4} Federer H.,   Real f\/lat chains, cochains and
variational problems, {\it Indiana Univ. Math. J.} {\bf 24} (1974), 351--406.

\bibitem{FF} Federer H.,  Fleming W.H.,    Normal and
integral currents, {\it Ann. of Math.~(2)} {\bf 72} (1960), 458--520.

\bibitem{H} Hsiang W.Y.,  On the compact homogeneous minimal
submanifolds, {\it Proc. Natl. Acad. Sci. USA} {\bf 56} (1966), 5--6.

\bibitem{HL} Hsiang W.Y., Lawson H.B. Jr.,  Minimal
submanifolds of low cohomogeneity, {\it J. Differential Geom.} {\bf 5} (1971),
1--38.

\bibitem{K} Karp L.,  Subharmonic functions on real and
complex manifolds, {\it Math. Z.} {\bf 179} (1982),  535--554.

\bibitem{L1} Lawson H.B.,  The stable homology of a f\/lat
torus, {\it Math. Scand.} {\bf 36} (1975), 49--73.

\bibitem{L2} Lawson H.B.,   The equivariant Plateau problem
and interior regularity, {\it Trans. Amer. Math. Soc.} {\bf 173} (1972), 231--249.

\bibitem{Li} Lin F.-H.,  Minimality and stability of
minimal hypersurfaces in $R\sp N$, {\it Bull. Austral. Math. Soc.} {\bf 36}
(1987), 209--214.

\bibitem{M1} Miranda M.,  Sul minimo dell'integrale del
gradiente di una funzione,  {\it Ann. Scuola Norm. Sup. Pisa~(3)} {\bf 19} (1965),
626--665.

\bibitem{M2}  Miranda M., Comportamento delle
successioni convergenti di frontiere minimali, {\it Rend. Sem. Mat. Univ. Padova} {\bf 39} (1967), 238--257.

\bibitem{S} Simoes P.,  A class minimal cones in $\mathbb{R}^n, n \ge 8$, that minimizes area, Thesis,  Berkeley, 1973.

\bibitem{Si} Simons J.,  Minimal varieties in
riemannian manifolds, {\it Ann. of Math. (2)} {\bf 88} (1968), 62--105.

\bibitem{WaW} Wang S.P.,   Wei S.W., Bernstein conjecture in
hyperbolic geometry, in Seminar on Minimal Submanifolds, Editor E.~Bombieri, {\it Ann. of Math. Stud.},  no.~103, 1983, 339--358.

\bibitem{W1} Wei S.W.,   Minimality, stability, and Plateau's problem, Thesis, Berkeley,
1980.

\bibitem{W} Wei S.W.,  Plateau's problem in symmetric
spaces, {\it Nonlinear Anal.} {\bf 12} (1988),  749--760.

\bibitem{Y} Yau S.T.,  Some function theoretic properties of complete
Riemannian manifolds and their applications to geometry, {\it Indiana
Univ. Math. J.} {\bf 25} (1976), 659--670.
\end{thebibliography}
\end{document}